\DeclareMathOperator{\e}{e}
\newcommand{\mbf}{\mathbb{F}}
\newcommand{\mbg}{\mathbb{G}}
\newcommand{\comment}[1]{}
\newcommand{\mcf}{\mathcal{F}}
\newcommand{\gs}{\sigma}
\newtheorem{theorem}{Theorem}
\newtheorem*{corollary}{Corollary}
\newtheorem{example}{Example}
\newtheorem{hypothesis}{Hypothesis}
\newtheorem{lemma}{Lemma}
\newtheorem{remark}[theorem]{Remark}
\newcommand{\bee}{\begin{equation}}
\newcommand{\eee}{\end{equation}}
\newcommand{\bea}{\begin{eqnarray}}
\newcommand{\eea}{\end{eqnarray}}
\newcommand{\bean}{\begin{eqnarray*}}
\newcommand{\eean}{\end{eqnarray*}}
\newcommand{\gep}{\varepsilon}
\begin{document}

\title{ Strict Local Martingales via Filtration Enlargement}
\author{Aditi Dandapani%
\thanks{Applied Mathematics Department, Columbia University, New York, NY 10027; email: ad2259@caa.columbia.edu%
}
\thanks{Supported in part by NSF grant DMS-1308483
} ~and Philip Protter%
\thanks{Statistics Department, Columbia University, New York, NY 10027; email: pep2117@columbia.edu..
}
\thanks{Supported in part by NSF grant DMS-1308483}
}
\date{\today }
\maketitle

\begin{abstract}

A strict local martingale is a local martingale that is not a martingale. We investigate how such a process might arise from a true martingale as a result of an enlargement of the filtration. We study and implement a particular type of enlargement, initial expansion of filtration, for various stochastic differential equations and provide sufficient conditions in each of these cases such that initial expansion can create a strict local martingale.
\end{abstract}

\section{Introduction}\label{intro}

We are interested in mechanisms by which strict local martingales can arise from martingales. A strict local martingale is a local martingale which is not a martingale. We study how expanding the original filtration with respect to which a process is a martingale can lead to a strict local martingale. That is, if we begin with a probability space $(\Omega, \mathcal{F}, \mbf, P)$ where $\mbf$ denotes $(\mcf_t)_{t\geq 0}$, and with an $\mbf$ martingale $M=(M_t)_{t\geq 0}$, and consider an expanded filtration $\mbg$ such that, for all $t$ we have the inclusion $\mathcal{F}_t \subset \mathcal{G}_t,$ when can we obtain a filtration $\mbg$ such that $M$ becomes a strict local martingale, possibly under a different but equivalent probability measure $Q$? 

At first sight it might seem like a strange construction, to enlarge a filtration and change the probability measure. We will argue that it is a natural thing to do from the standpoint of Mathematical Finance.

Strict local martingales have recently been a popular subject of study. Some relatively recent papers concerning strict local martingales include Biagini et al~\cite{BFN}, Bilina-Protter~\cite{Roseline}, Chybiryakov~\cite{OC}, Cox-Hobson~\cite{CH}, Delbaen-Schachermayer~\cite{DS}, F\"ollmer-Protter~\cite{FP},  Lions-Musiela~\cite{L-M}, Hulley~\cite{HH}, Keller-Ressel~\cite{K-R}, Klebaner-Liptser~\cite{KL}, Kreher-Nikeghbali~\cite{KN}, Larsson~\cite{Martin}, Madan-Yor~\cite{MY}, Mijatovic-Urusov~\cite{MU}, Protter~\cite{PhilipToBe}, Protter-Shimbo~\cite{PhilipKazu}, and Sin~\cite{C-S}, and from this list we can infer a certain interest. Our motivation comes from the analysis of financial bubbles, as explained in~\cite{PhilipToBe}, for example. The theory tells us that on a compact time set, the (nonnegative) price process of a risky asset is in a bubble, i.e., undergoing speculative pricing, if and only if the price process is a strict local martingale under the risk neutral measure governing the situation. Therefore one can model the formation of bubbles by observing when the price process changes from being a martingale to being a strict local martingale. This is discussed in detail in~\cite{JPS2}, \cite{BFN}, and~\cite{PhilipToBe}, for example. 

The models we study are stochastic volatility models. We work with the setting examined in Lions and Musiela ~\cite{L-M}, which provides necessary and sufficient conditions such that the solutions of such stochastic differential equations are strict local martingales. We assume always that a component of the stochastic volatility process is an It\^{o} diffusion, so that we can  use Feller's test for explosions in our quest to characterize the stochastic processes in question. This is similar to techniques used in~\cite{BFN} and~\cite{C-S},  but with the difference that we introduce a cause for bubbles (new information available to the market), and then show how this mathematically evolves into a bubble. 

The expansion of  filtration using initial expansion involves adding the information encoded in a random variable to the original $\gs$ algebra at time zero. It then propagates throughout the filtration. This augmentation doesn't have to happen at time zero, however; it can happen at any finite valued stopping time $\tau$. This is due to the fact that at $\tau$ we know what is happening, and thus we can think of an enlargement beginning at $\tau$ exactly analogously to one beginning at time $t=0$, with simply the time $\tau$ playing the role of the time $t=0$. From now on however, we will deal with enlargements at time $t=0$, for notational simplicity. 

This type of enlargement of filtration from $\mbf$ to $\mbg$ changes the semimartingale decomposition of the underlying price process, and therefore leads to a change of a risk-neutral measure from $P$ to an equivalent probability measure $Q$. Our stochastic process, which we will call $S,$ which is assumed to be a martingale under $(P,\mbf)$, under certain conditions can become a strict local martingale on a stochastic interval that depends on the choice of $Q$ and the random variable that we add to $\mbf.$ This random variable is denoted $L$.

The case of initial expansions is particularly tractable, since Jacod~\cite{JAC} has developed the theory that provides us with the dynamics of the process under the enlarged filtration. That is, he provides us with the semimartingale decomposition of the process in the enlarged filtration, which permits us to choose a risk neutral measure $Q$ for the enlarged filtration that removed the enlargement created drift. Under that $Q$ we can sometimes detect the presence of the strict local martingale property of the process, or lack thereof. 

An outline of our paper is as follows. After an introduction, Section~\ref{s2} is the heart of the paper. Here we present the model of P.L. Lions and M. Musiela on stochastic volatility (in the style of what are known as Heston-type models), and we show how the addition of more information via an ``expansion of the filtration'' can lead what was originally a martingale to become a strict local martingale, under a risk neutral measure chosen from the infinite selection available in an incomplete market.  Our main results are Theorem~\ref{t1} and Theorem~\ref{t2}. In Section~\ref{s3} we drop the hypothesis of continuous paths and extend our results to the case of discontinuous martingales replacing Brownian motions. Our main result in this section is Theorem~\ref{t7}. In Section~\ref{s4} we conclude with a description of how these results relate to mathematical finance.

\section{The Framework of Lions and Musiela}\label{s2}

\subsection{Our First Model}\label{first}

Let us begin with the framework established by P.L. Lions and M. Musiela~\cite{L-M}, that treats the case of stochastic volatility. We will begin working on a probability space $(\Omega, \mathcal{F}, \mbf, P),$ where $\mbf=(\mathcal{F}_t)_{0\leq t \leq T}$. We assume that  the stochastic process $S=(S_t)_{0\leq t\leq T}$, which we can think of as a stock price, and the stochastic volatility satisfy SDEs of the following system of two equations:

 \begin{eqnarray} dS_t&=&S_t v_tdB_t; \qquad S_0=1\\
  dv_t&=&\mu(v_t)dW_t +b(v_t)dt;  \quad v_0=1.
 \end{eqnarray}
 
Here $B$ and $W$ are correlated Brownian motions, with correlation coefficient $\rho$. Our time interval is assumed to be $[0,T].$ 
We will assume that $\mu$ and $b$ are $C^1$ functions on $[0,\infty)$ and that $\mu$ is Lipschitz continuous on $[0,\infty)$ such that: 

\begin{eqnarray*}
\mu(0)&=&0\\
b(0) &\geq& 0\\
 \mu(x) \textgreater 0&\text{ if }&x>0\\ 
 b(x)&\leq& C(1+x)
 \end{eqnarray*} 

We recall the conditions of Lions and Musiela, which allow us to determine whether the solution to $(1)$ is a strict local martingale or an integrable, non-negative martingale: If

           \begin{equation*}  \limsup_{x \to +\infty} \frac{\rho\ x\mu(x)+b(x)}{x} \textless \infty\  \end{equation*} 
           holds, then $S$ is a non negative martingale.           
          
             For the same model, if the condition
                
                \begin{equation*}   \liminf_{x \to +\infty}(\rho\ x\mu(x)+b(x) )\phi(x)^{-1} \textgreater 0\  \end{equation*} 
                
                holds, then $S$ is not a martingale but a supermartingale and a strict local martingale.
Here, $\phi(x)$ is an increasing, positive, smooth function that satisfies 
\begin{equation}   \int_{a}^{\infty} \frac{1}{\phi(x)}dx \textless \infty \end{equation} 
with $a$ being some positive constant.

\begin{remark}
Lions and Musiela~\cite{L-M} assume that $\mu$ and $b$ are both $C^\infty$, but if one reads their proof they do not use the force of that assumption. Assuming they are $C^1$ as we have done is enough for their proofs to work.
\end{remark}                
                
 We would like to determine whether or not an enlargement of the filtration can give rise to a strict local martingale in the bigger filtration, when one begins with a true martingale in the smaller one. More specifically, we would like to answer the following question: beginning with a probability space $(\Omega, \mathcal{F}, \mbf, P)$, and a price process $S$ that is an $\mbf$ martingale, if we perform a countable expansion of $\mbf$, resulting in an enlarged filtration $\mbg$, can we obtain a $\mbg$ strict local martingale under an equivalent measure?

\subsection{The Case of Initial Expansions}\label{Init}

We will consider the case of initial expansions, i.e. the expansion of the filtration $\mbf$ by adding a random variable $L \in \mathcal{F}$ to $\mathcal{F}_0.$ We assume that this random variable $L$ takes values in a Polish space $(E, \mathcal{E}).$ The new, enlarged filtration, which we will call $\mbg$ can be denoted as $$\mathcal{G}_t=\bigcup_{\epsilon \textgreater 0}(\mathcal{F}_{t+\epsilon} \vee \sigma(L))$$

We use the results of Jean Jacod~\cite{JAC} on the initial expansion of filtrations: If $S$ is a continuous $\mbf$ martingale, there exists a process $(x,\omega,t) \rightarrow k^{x}(t,\omega),$ measurable with respect to the sigma algebra $\mathcal{E} \otimes \mathcal{P}(\mathbb{F})$, where ${P}(\mathbb{F})$ denotes the predictable sigma algebra on $\Omega \otimes \mathbb{R}_{+}$, such that $\langle q^{x},S \rangle =(k^{x}q^{x}_{-}) \cdot \langle S,S\rangle$. 

The function $q^{x}$ is given by the following: Let $\eta$ be the distribution of $L$, and let $Q_t(\omega, dx)$ be the regular conditional distribution of $L$, given $\mathcal{F}_t.$ The process $q^{x}(t,\omega)\eta(dx)$ is an $\mbf$ martingale, and a version of $Q_t(\omega,dx).$    

We have 
\begin{equation*} Q_t(w, \cdot)=q_t(w, \cdot)\eta(\cdot) \end{equation*}
Jacod proves the existence of an $\mbf$ predictable process $(k^{L}_t)_{0\leq t\leq T}$ such that in our case, where we are assuming $S$ has continuous paths,\footnote{Note that when $S$ has continuous paths the process $\langle S,S\rangle = [S,S]$, and thus here we write it in the simpler form $[S,S]$. See for example~\cite{PP} for discussion of the relations between the processes $\langle\cdot,\cdot\rangle$ and $[\cdot,\cdot]$.}
 \begin{equation*} [q^{L},S]=k^{L}q^{L}_{-} \cdot [S,S] \end{equation*}
 We have used the left continuous version of $q^L$ in the above equation to ensure it is predictable, but since $[S,S]$ is continuous here, we could just as well have used the process $q^L$ itself, without bothering about a left continuous version.
The process $(k^{L}_t)_{t\geq 0}$ satisfies $k^{L}=\frac{h^{L}}{q^{L}}$ if $q^{L} \textgreater 0$ and $k^{L}_t=0$ otherwise. In the above, $h^{L}_s$ is the density process such that we have \begin{equation} d[q^{L},S]_s=h^{L}_sd[S,S]_s \end{equation}
Jacod's theorem tells us next that the following process is a $\mbg$ local martingale: 
\begin{equation} \widetilde{S_t}=S_t-\int_{0}^{t}k^{L}_sd[S,S]_s \end{equation} 
By saying it is a $\mbg$ local martingale, we are not precluding that it is a martingale; we need to have extra conditions to conclude it is a strict local martingale. 

Let us illustrate this concept with an example, the case where the random variable $L$ takes on only a finite number of values:\\ Let $A_1, A_2....A_n$ be a sequence of events such that  $A_i \cap A_j= \emptyset$ if $ i \neq j$ and $\bigcup\limits_{i=1}^n A_k=\Omega.$ The enlarged filtration, $\mbg$, is the filtration generated by $\mbf$ and the random variable $L= \sum\limits_{i=1}^n a_i1_{A_i}.$  \\ 

In this case, we have \begin{equation*} k^{L}_tq^{L}_t= \sum\limits_{i=1}^n \frac{\xi^{i}_t}{P(A_i)}1_{\{L=a_i\}},\end{equation*} and \begin{equation*} q^{L}_t= \sum\limits_{i=1}^n \frac{P(L=a_i |\mathcal{F}_t)}{P(A_i)}1_{\{L=a_i\}} \end{equation*}

Here, $\xi^{i}_t$ are processes arising from the Kunita-Watanabe inequality, which ensures absolute continuity of the paths: If we let $N^{i}_t$ be the $\mathcal{F}$-martingale $P(L=a_i |\mathcal{F}_t)$ we have that $d[N^{i},S]_t=\xi^{i}_td[S,S]_t.$

We will henceforth work with the general case of initial expansions, wherein we don't necessarily have a countable partition of the sample space.

Returning to the Lions-Musiela framework, we have that, under $(P, \mbg),$ the price process and stochastic volatility satisfy: 

$$S_t=\int_{0}^{t}(S_sv_s)dB_s- \int_{0}^{t}k^{L}_sd[S,S]_s+ \int_{0}^{t}k^{L}_sd[S,S]_s,$$

where 
$$
\int_{0}^{t}(S_sv_s)dB_s- \int_{0}^{t}k^{L}_sd[S,S]_s
$$ 
is a $(P,\mbg)$ martingale, and 
$$
\int_{0}^{t}k^{L}_sd[S,S]_s
$$ 
is a finite variation process. The stochastic volatility in turn satisfies: $$ v_t=v_0+\int_{0}^{t}\mu(v_s)dW_s-\int_{0}^{t}k^{L}_s\mu(v_s)^{2}ds +\int_{0}^{t}k^{L}_s\mu(v_s)^{2}ds +\int_{0}^{t}b(v_s)ds.$$ Here, $$v_t=v_0+\int_{0}^{t}\mu(v_s)dW_s-\int_{0}^{t}k^{L}_s\mu(v_s)^{2}ds$$ is a $(P,\mbg)$ martingale, and $$\int_{0}^{t}k^{L}_s\mu(v_s)^{2}ds +\int_{0}^{t}b(v_s)ds$$ is a finite variation process. 

We perform a Girsanov transform, to switch to a probability measure under which $S$ is a local martingale: under $(Q, \mbg)$, where the measure $Q$ is equivalent to $P,$ $S$ possesses the following decomposition: $$S_t=\int_{0}^{t}(S_sv_s)dB_s- \int_{0}^{t}k^{L}_s(S_sv_s)^{2}ds-\int_{0}^{t}\frac{1}{Z}d[Z,\sigma \cdot B]_s +\int_{0}^{t}k^{L}_s(S_sv_s)^{2}ds+ \int_{0}^{t}\frac{1}{Z}d[Z,\sigma \cdot B]_s.$$ Here, $Z_t=E[\frac{dQ}{dP}|\mathcal{G}_t].$ Writing 
$$
Z_t=1+ZH\cdot B_t +ZJ \cdot W_t,
$$ 
where $\cdot$ represents stochastic integration, for predictable processes $J$ and $H$, we have the above equal to (recall that $d[B,W]_t=\rho dt$):

 $$ \int_{0}^{t}(S_sv_s)dB_s- \int_{0}^{t}k^{L}_s(S_sv_s)^{2}ds-\int_{0}^{t}((S_sv_s)H_s+ \rho J_s)ds+ \int_{0}^{t}k^{L}_s (S_sv_s)^{2}ds +\int_{0}^{t}((S_sv_s)H_s+\rho J_s)ds.$$ In order to get rid of the finite variation term in this decomposition, we set $$ \int_{0}^{t}k^{L}_s(S_sv_s)^{2}ds +\int_{0}^{t}((S_sv_s)H_s+\rho J_s)ds=0.$$
 
 The volatility, in turn, has the following decomposition: \begin{equation*}v_t=\int_{0}^{t}\mu(v_s)dW_s-\int_{0}^{t}k^{L}_s\mu(v_s)^{2}ds-\int_{0}^{t}\frac{1}{Z}d[Z,\mu\cdot W]_s+\int_{0}^{t}k^{L}_s\mu(v_s)^{2}+\int_{0}^{t}b(v_s)ds +\int_{0}^{t}\frac{1}{Z}d[Z,\mu\cdot W]_s \end{equation*}

 This equals: 
 
  \begin{multline*}  v_t= \int_{0}^{t}\mu(v_s)dW_s-\int_{0}^{t}k^{L}_s\mu(v_s)^{2}ds-\int_{0}^{t}(\rho \mu(v_s)H_s+\mu(v_s)J_s)ds+\int_{0}^{t}b(v_s)ds+ \int_{0}^{t}k^{L}_s\mu(v_s)^{2}ds+\\ \int_{0}^{t}(\rho \mu(v_s)H_s+\mu(v_s)J_s)ds
  \end{multline*}

Here $$\int_{0}^{t}\mu(v_s)dW_s-\int_{0}^{t}k^{L}_s\mu(v_s)^{2}ds-\int_{0}^{t}(\rho \mu(v_s)H_s+\mu(v_s)J_s)ds$$ is a $(Q, \mbg)$ local martingale, and $$ \int_{0}^{t}b(v_s)ds+ \int_{0}^{t}k^{L}_s\mu(v_s)^{2}ds+\int_{0}^{t}(\rho \mu(v_s)H_s+\mu(v_s)J_s)ds$$ is a finite variation process.
  
 Recall that under $(Q, \mbg)$, we would like $S$ to be a local martingale. This entails the finite variation part of the decomposition of $S$ under $(Q, \mbg)$ being zero. Given that there are two Brownian motions, there are infinitely many combinations of $H$ and $J$ that will work. What we need is 
\bee\label{nne1}
k^{L}_t(S_tv_t)^{2}= -(S_tv_t)H_t -\rho J_t.
\eee
Note that $k^L_t$ in our framework is defined by the relation, for a right-continuous martingale $(q^{L}_t)_{t\geq 0}$, by 
$$
[q^L,S]_t=\int_0^tk^L_sq^L_sd[S,S]_s=\int_0^tk^L_sq^L_s(S_sv_s)^2ds
$$

In the rest of this paper we will make the following assumptions on the processes $k$, $H$ and $J:$ 

\bigskip

\begin{hypothesis}[Standing Assumptions]\label{SA}
\begin{eqnarray}
\label{SA1}&&\text{We} \text{ assume that each of }k,H,\text{ and }J\text{ have right continuous paths }a.s.\\
\label{SA2}&&Q(\omega: k^{L}_0 \textgreater 0) \textgreater 0\\
\label{SA3}&&Q\text{ is a true probability measure}
\end{eqnarray}
\end{hypothesis}

We note that in Subsection~\ref{ss1} we give examples and also a framework where the important process $k^L$ has right continuous paths, a.s., which shows that Hypothesis (1) is not unreasonable.  Our new drift, which we will call $\hat{b}(v_t),$ is given by 
\bee\label{nne4}
\hat{b}(v_t)=b(v_t)+k^{L}_t\mu^{2}(v_t)+(\rho H_t+J_t)\mu(v_t).
\eee

Notice that we can no longer represent the drift in deterministic terms as simply functions of the real variable $x$, so we cannot immediately invoke the results of Lions \& Musiela. To address this, let us fix $0 \textless \gep^{(1)} \textless k^{L}_0$ and $ |\rho H_0+J_0| \textless \gep^{(2)}$ and define the following random times: 
  
 \begin{eqnarray*}
\tau^{k}&=& \inf \{t: |k^{L}_t| \textless \gep^{(1)}\}\\   
\tau^{H,J}&=&\inf \{t: |\rho H_t+J_t| \textgreater \gep^{(2)}\}
\end{eqnarray*} 

Note that since the processes $k^{L}_t,$ $H_t$ and $J_t$ are assumed to be $\mbg$ predictable, right continuous processes, we can indeed claim that these random times are $\mbg$ stopping times, by the theory of d\'ebuts, as originally developed by Dellacherie ~\cite{Del}.

Now define the stopping time $\tau$ to be 
  \begin{equation}\label{tau}
  \tau=(\tau^{k} \wedge \tau^{H,J}).
  \end{equation}
 By ~\eqref{SA2} we have that $Q(\tau \textgreater 0) \textgreater 0.$
    
On the stochastic interval $[0,\tau],$ we have the following lower bound on our drift coefficients: 
\begin{equation*}
\hat{b}(v_t)= b(v_t)+k^{L}_t\mu^{2}(v_t)+(\rho H_t+J_t)\mu(v_t) \geq b(v_t)+\min(\gep^{(1)}, \gep^{(2)})\mu^{2}(v_t)-\max(\gep^{(1)}, \gep^{(2)})\mu(v_t)
\end{equation*}

 Before we state the next result, we state and prove a technical lemma.
 
 \medskip
 
 \begin{lemma}\label{ell1}
 Let $B$ be a standard Brownian motion, and let $\nu$ be any continuous, adapted, finite valued process such that $\int_0^t\nu_s^2ds<\infty$ a.s. for each $t>0$. Suppose $\gs$ is continuous and is such that  $S$ exists and is the unique solution of 
\bee
S_t=1+\int_0^tS_s\gs(S_s)\nu_sdB_s
\eee
Then $S$ is strictly positive for all $t\geq 0$ a.s.
\end{lemma}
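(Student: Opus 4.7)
The plan is to identify $S$ as the Doléans-Dade (stochastic) exponential of a particular continuous local martingale, since that exponential is strictly positive by construction.

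First, I would set $X_t := \int_0^t \sigma(S_s)\,\nu_s\, dB_s$. Because the assumed solution $S$ is a continuous semimartingale and both $\sigma$ and $\nu$ are continuous, the integrand $\sigma(S_\cdot)\nu_\cdot$ is continuous and adapted, hence locally bounded on each $[0,t]$ a.s. Combined with the standing hypothesis $\int_0^t\nu_s^2\,ds<\infty$ a.s., this shows that the stochastic integral defining $X$ is well-defined and that $X$ is a continuous local martingale with
\[
[X,X]_t \;=\; \int_0^t \sigma(S_s)^2\,\nu_s^2\, ds.
\]

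Next, I would rewrite the SDE as $S_t = 1 + \int_0^t S_s\, dX_s$. In other words, $S$ solves the linear SDE $dZ_t = Z_{t-}\, dX_t$ with $Z_0 = 1$. The unique solution to this equation is the Doléans-Dade exponential of $X$, which, because $X$ is continuous, has the explicit closed form
\[
\mathcal{E}(X)_t \;=\; \exp\!\Bigl(X_t - \tfrac{1}{2}[X,X]_t\Bigr).
\]
By the uniqueness clause in the hypothesis, $S_t = \mathcal{E}(X)_t$ for all $t\ge 0$, and the right-hand side is manifestly strictly positive for every $t$ on a set of full probability.

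The only genuine obstacle in this argument is the verification that $X$ is a bona fide continuous local martingale — i.e., that $\int_0^t \sigma(S_s)^2 \nu_s^2\,ds$ is finite a.s. — but this is immediate from the continuity of $S$ and $\sigma$ (which gives local boundedness of $\sigma(S_\cdot)$) together with the hypothesis on $\nu$. Everything else is the standard uniqueness/formula for the Doléans-Dade exponential of a continuous local martingale, which can be cited directly from Protter's text.
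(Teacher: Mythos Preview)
Your argument is correct, and it is in spirit the same log-transform idea the paper uses, but packaged more cleanly. The paper defines the first hitting time $V=\inf\{t>0:S_t=0\}$, introduces stopping times $R_n=\inf\{t>0:S_t=1/n\text{ or }S_t=S_0\vee n\}$, applies It\^o's formula to $\ln|S|$ on $[0,R_n]$, and then argues by contradiction: as $R_n\uparrow V$ the left-hand side would diverge on $\{V<\infty\}$ while the right-hand side stays finite. You instead observe directly that $S$ solves the \emph{linear} SDE $dS=S\,dX$ for the continuous local martingale $X_t=\int_0^t\sigma(S_s)\nu_s\,dB_s$ and invoke the explicit Dol\'eans-Dade exponential $\mathcal{E}(X)_t=\exp\bigl(X_t-\tfrac12[X,X]_t\bigr)$; this is exactly the computation the paper carries out by hand, but with the stopping-time scaffolding absorbed into the cited theorem. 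One small correction: the uniqueness you need to conclude $S=\mathcal{E}(X)$ is \emph{not} the uniqueness hypothesis of the lemma (which concerns the original nonlinear equation), but the standard uniqueness of solutions to the linear equation $dZ=Z\,dX$ for a \emph{fixed} continuous semimartingale $X$; once $X$ is constructed from the given $S$, that linear-SDE uniqueness is what forces $S=\mathcal{E}(X)$, and the lemma's own uniqueness clause is actually superfluous for your argument.
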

\begin{proof}
We let $V=\inf\{t>0: S_t=0\}$. It suffices to show that $P(V<\infty)=0$.

Define stopping times $R_n=\inf\{t>0: S_t=1/n\text{ or }S_t=S_0\vee n\}$. 
Note that $P(R_n>0)=1$ for $n\geq 2$ because $S_0=1$ a.s. and $S$ is continuous. Use It\^o's formula up to time $R_n$ to get
\bee\label{eph1}
\ln(\vert S_{R_n}\vert)=\ln(S_0)+\int_0^{R_n}S_s\gs(S_s)\nu_sdB_s-\frac{1}{2}\int_0^{R_n}S_s^2\gs(S_s)^2\nu_s^2ds
\eee
The stopping times $R_n$ increase to $V$ as $n$ tends to $\infty$, so the left side of~\eqref{eph1} tends to $\infty$ on the event $\{V<\infty\}$. The right side however remains finite ($\gs(x)$ is assumed continuous, and is therefore bounded on compact sets) on $\{V<\infty\}$, and the only way this can happen is if $P(V<\infty)=0$.

\end{proof}

 \begin{remark} The work of Engelbert and Schmidt~\cite{ES} gives necessary and sufficient conditions for a solution to exist that is unique in law, at least when $\nu$ is not present. Lemma~\ref{ell1} remains true under more general hypotheses, with the obvious modifications of the proof. As such it is a slight extension of~\cite[Theorem 71 of Chapter V]{PP}.
 \end{remark}

 The above discussion gives us the following result.

 \begin{theorem}\label{t1}
Assume $\mu$ and $b$ are $\mathcal{C}^1$ and that Hypothesis~\ref{SA} holds, as well as the following conditions: 
\begin{eqnarray*}  
& \limsup_{x \to +\infty}& \frac{\rho\ x\mu(x)+b(x)}{x} \textless \infty\\
& \liminf_{x \to +\infty}&(\rho\ x\mu(x)+b(x) +\min(\gep^{(1)}, \gep^{(2)})\mu^{2}(x)-\max(\gep^{(1)}, \gep^{(2)})\mu(x))\phi(x)^{-1} \textgreater 0
\end{eqnarray*}
 on the functions $\mu$,$b$ are satisfied, and assume that $B$ and $W$ are correlated Brownian motions with correlation $\rho>0$. Let the process $S$ be the unique strong solution of the SDE 
 
\begin{eqnarray*} 
dS_t&=&S_t v_tdB_t\\
dv_t&=&\mu(v_t)dW_t +b(v_t)dt  
\end{eqnarray*}
    
on $( P,\mbf)$. The solution $S$ is also the solution of
\begin{eqnarray*} 
dS_t&=&S_t v_tdB_t\\
dv_t&=&\mu(v_t)dW_t +b(v_t)dt  +k^{L}_t\mu^{2}(v_t)dt+ (\rho H_t+J_t)\mu(v_t)dt
 \end{eqnarray*} 
on $(Q, \mbg)$. Then $S$ is a  positive $( P,\mbf)$ martingale and a positive $(Q, \mbg)$ \textit{strict} local martingale on the stochastic interval $[0,\tau]$, where $\tau$ is given in~\eqref{tau}. More specifically, we have $E[S^{\tau}_t] \textless S_0.$
 
 In the above, $\phi(x)$ is an increasing, positive, smooth function that satisfies 
 \begin{equation*}  \int_{a}^{\infty} \frac{1}{\phi(x)}dx \textless \infty \end{equation*} 

\end{theorem}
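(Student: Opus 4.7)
The plan is to verify the three distinct assertions separately: $S$ is a positive $(P,\mbf)$-martingale; $S>0$ under $Q$; and $E^Q[S^{\tau}_t]<S_0$, which is the strict local martingale property on $[0,\tau]$.

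The first two assertions are routine. The $(P,\mbf)$-martingale property is the conclusion of Lions and Musiela's sufficient martingale criterion, which is exactly the first hypothesis $\limsup_{x\to\infty}(\rho x\mu(x)+b(x))/x<\infty$. Positivity of $S$ under $P$ is an application of Lemma~\ref{ell1} with $\nu=v$ and $\gs\equiv 1$, valid since continuity of $v$ ensures $\int_0^t v_s^2\,ds<\infty$ a.s.; equivalence $Q\sim P$ transfers positivity to $Q$.

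The core step is to show $E^Q[S^{\tau}_t]<1$. I would introduce an auxiliary SDE system on $(\Om,\mcg,Q)$ with purely deterministic drift
\[\tilde b(x):=b(x)+\min(\gep^{(1)},\gep^{(2)})\mu(x)^2-\max(\gep^{(1)},\gep^{(2)})\mu(x),\]
and define $(\tilde v,\tilde S)$ by $d\tilde v_t=\mu(\tilde v_t)\,dW^Q_t+\tilde b(\tilde v_t)\,dt$ and $d\tilde S_t=\tilde S_t\,\tilde v_t\,dB^Q_t$, with $\tilde v_0=\tilde S_0=1$, driven by the same $Q$-Brownian motions that drive $(v,S)$. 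The second hypothesis of the theorem coincides precisely with the Lions--Musiela strict local martingale criterion applied to the pair $(\mu,\tilde b)$, so $\tilde S$ is a positive strict local martingale under $(Q,\mbg)$, giving $E^Q[\tilde S_t]<1$ for all $t$ sufficiently large.

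The definition of $\tau=\tau^k\wedge\tau^{H,J}$ ensures $k^L_t\geq\gep^{(1)}$ and $|\rho H_t+J_t|\leq\gep^{(2)}$ on $[0,\tau]$, which together with $\mu\geq 0$ yields the pointwise bound $\hat b(v_t)\geq\tilde b(v_t)$ on $[0,\tau]$. Since the two SDEs for $v$ and $\tilde v$ share the same diffusion coefficient $\mu$ and are driven by the same Brownian motion $W^Q$, the one-dimensional SDE comparison theorem gives the pathwise inequality $v_t\geq\tilde v_t$ for $t\leq\tau$, $Q$-a.s. The final step, which I expect to be the main obstacle, is to transfer the strict local martingale property from $\tilde S$ to $S^{\tau}$. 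My approach would be to adapt the Lions--Musiela Feller-test argument directly: strict local martingality of $\tilde S$ corresponds to finite-time explosion of $\tilde v$ under an appropriate Girsanov tilt by $\tilde S$, and the comparison $v\geq\tilde v$ on $[0,\tau]$, combined with $Q(\tau>0)>0$ from \eqref{SA2} and the fact that diffusions failing Feller's explosion test explode before any prescribed positive time with positive probability, should propagate this explosion to $v$ before $\tau$. This loss of mass under the tilt is precisely the defect $1-E^Q[S^{\tau}_t]$, yielding the required strict inequality.
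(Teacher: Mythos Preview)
Your overall architecture matches the paper's: Lemma~\ref{ell1} for positivity, Lions--Musiela for the $(P,\mbf)$ martingale property, and a comparison-plus-explosion argument for strictness under $(Q,\mbg)$. The gap is in the last step, precisely where you anticipated the obstacle.

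You compare $v\ge\tilde v$ under $Q$ on $[0,\tau]$, deduce from Lions--Musiela that $\tilde S$ is a strict local martingale (equivalently, that $\tilde v$ explodes under the Girsanov tilt by $\tilde S$), and then assert that this ``loss of mass under the tilt is precisely the defect $1-E^Q[S^\tau_t]$.'' It is not. The defect $1-E^Q[S^\tau_t]$ is governed by explosion of $v$ under the tilt of $Q$ by $S^\tau$, call it $\hat P$; the strict local martingality of $\tilde S$ concerns explosion of $\tilde v$ under the \emph{different} tilt by $\tilde S$. Your pathwise $Q$-comparison $v\ge\tilde v$ does not connect these two distinct measure changes, and there is no monotone relation between the stochastic exponentials $S$ and $\tilde S$ coming from $v\ge\tilde v$ alone.

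The paper closes this gap by reversing the order of operations: it tilts first by $S$ to obtain $\hat P$, writing $E^Q[S_{T_n}1_{\{T_n\le t\wedge\tau\}}]=\hat P(T_n\le t\wedge\tau)$ with $T_n=\inf\{t:v_t\ge n\}$. Under $\hat P$ the drift of $v$ picks up the extra Girsanov term $\rho v_t\mu(v_t)$ and becomes $b(v_t)+\rho v_t\mu(v_t)+k^L_t\mu^2(v_t)+(\rho H_t+J_t)\mu(v_t)$. The comparison theorem (Theorem~\ref{comp}) is then applied \emph{under $\hat P$} against the one-dimensional diffusion with deterministic drift $b(x)+\rho x\mu(x)+\min(\gep^{(1)},\gep^{(2)})\mu^2(x)-\max(\gep^{(1)},\gep^{(2)})\mu(x)$, and the second hypothesis of the theorem is exactly the Feller/Lions--Musiela explosion criterion for \emph{this} diffusion (note the $\rho x\mu(x)$ term is already present). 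One obtains $\hat P(T_\infty\le t\wedge\tau)>0$ directly, and hence $E^Q[S^\tau_t]<S_0$. In short: tilt by $S$ first, then compare under the tilted measure; your route of comparing under $Q$ and then invoking two separate tilts does not close.
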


\begin{remark} The condition $\rho>0$ assumed in Theorem 3 is used in the proof of the quoted result of Lions and Musiela, which is why we need to assume it.
\end{remark}
    
Before we continue, let us recall a result (proved for example in~\cite{PP}) that allows us to compare the values of solutions of stochastic differential equations. It is well known, but we include it here for the reader's convenience. Let us denote by $\mathcal{D}^{n}$ the set of $\mathbb{R}^{n}-$ valued c\`adl\`ag processes. We write $\mathcal{D}$ for $\mathcal{D}^1$. An operator $\bf{F}$ from $\mathcal{D}^{n}$ to $\mathcal{D}$ is said to be \textit{Process Lipschitz} if for all $X,$ $Y \in \mathcal{D}^{n}$ and for all stopping times $T:$
 
 \begin{enumerate}
 
 \item $X^{T_{-}}=Y^{T_{-}} \Rightarrow F(X)^{T_{-}}=F(Y)^{T_{-}}$
 \item There exists an adapted process $K_t$ such that $||F(X)_t-F(Y)_t|| \leq K_t ||X_t-Y_t||$, where $\parallel\cdot\parallel$ denotes the sup norm. 
 \end{enumerate}
 
 \begin{theorem}[Comparison Theorem]\label{comp}~\cite[p. 324]{PP}
 Let $Z$ be a continuous semimartingale, let $F$ be process Lipschitz, and let $A_t$ be adapted, increasing, and continuous. Assume that $G$ and $H$ are process Lipschitz functionals such that $G(X)_{t_{-}} \textgreater H(X)_{t_{-}}$ for all $t>0$ and all semimartingales $X.$ Let $x_0\geq y_0$, and $X$ and $Y$ be the unique solutions of 
 
 \begin{eqnarray*} 
  X_t=x_0+\int_{0}^{t}G(X)_{s_{-}}dA_s+\int_{0}^{t}F(X)_{s{-}}dZ_s\\
 Y_t=y_0+\int_{0}^{t}H(Y)_{s_{-}}dA_s+\int_{0}^{t}F(Y)_{s{-}}dZ_s
 \end{eqnarray*}
 
 Then, $P\{ \exists  t \geq 0: X_t \leq Y_t\} =0.$
  \end{theorem}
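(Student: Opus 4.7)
The plan is to argue by contradiction, using the strict inequality $G>H$ as a strictly positive ``source'' in the SDE for the difference $\gD_t := X_t - Y_t$ that the process Lipschitz noise cannot overcome at small scales. Since $A$ and $Z$ are continuous and $F$, $G$, $H$ are process Lipschitz, both $X$ and $Y$, and hence $\gD$, are continuous. Suppose for contradiction that $T := \inf\{t \geq 0 : \gD_t \leq 0\}$ is finite with positive probability; then by continuity $\gD_T = 0$, whereas $\gD_0 = x_0 - y_0 \geq 0$. (For the conclusion to be non-trivial one reads the initial hypothesis as strict, $x_0 > y_0$, since otherwise $t=0$ already violates the conclusion.) The goal is to show $\gD_{T+s}>0$ for all small $s>0$ on an event of positive probability, contradicting the definition of $T$.

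The crucial decomposition is
\[
G(X)_{s-}-H(Y)_{s-} \;=\; \bigl[G(X)_{s-}-H(X)_{s-}\bigr] + \bigl[H(X)_{s-}-H(Y)_{s-}\bigr],
\]
in which the first bracket $\Psi_{s-}$ is strictly positive by hypothesis, while the second bracket, together with $F(X)_{s-}-F(Y)_{s-}$, is dominated by $K_s\sup_{u\le s}|\gD_u|$ by process Lipschitz. Localizing by stopping times that bound $K$ and the quadratic variation $[Z,Z]$, and using right continuity of $\Psi$ together with its strict positivity, one extracts deterministic constants $\ep>0$, $h>0$ and an event $E$ of positive probability on which $\Psi_{s-}\geq \ep$ and $K_s\leq C$ for $s\in[T,T+h]$. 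On this interval,
\[
\gD_{T+s} = \int_T^{T+s}\Psi_{u-}\,dA_u + \int_T^{T+s}\bigl[H(X)_{u-}-H(Y)_{u-}\bigr]dA_u + \int_T^{T+s}\bigl[F(X)_{u-}-F(Y)_{u-}\bigr]dZ_u.
\]

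The key step is a Gronwall type estimate on $\mbe\bigl[\sup_{r\le s}\gD_{T+r}^{2}\,\mathbf 1_E\bigr]$. Applying the Burkholder-Davis-Gundy inequality to the stochastic integral and the process Lipschitz bound to the two non-source integrands yields an estimate in which the Lipschitz perturbation contributes a term that is $o(1)$ as $s\downarrow 0$, while the source contributes at least $\ep(A_{T+s}-A_T)$ to $\gD_{T+s}$. Choosing $h$ small and localizing further so that $A$ charges a right neighborhood of $T$ with positive probability on $E$, the deterministic source wins, forcing $\gD_{T+s}>0$ on $E$ for all small $s>0$ and contradicting the definition of $T$. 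The main obstacle is the quantitative balance between this strictly positive drift and the Lipschitz controlled perturbation: strictness of $G>H$ is used essentially to extract a uniform lower bound $\Psi\geq\ep$ after localization, and continuity of both $A$ and $Z$ is essential because it controls the quadratic variation over small intervals and propagates to continuity of $\gD$, which is what makes $\gD_T=0$ on $\{T<\infty\}$ in the first place.
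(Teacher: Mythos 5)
The paper does not prove this theorem; it is quoted verbatim from Protter's book (\cite[Theorem V.54]{PP}), so I compare your sketch against the proof there. Your argument has two gaps. The first is structural: you set $T=\inf\{t:\Delta_t\leq 0\}$, observe $\Delta_T=0$ on $\{T<\infty\}$, and aim to show $\Delta_{T+s}>0$ for small $s>0$, ``contradicting the definition of $T$.'' But that is not a contradiction --- with $\Delta_T=0$ and $\Delta>0$ just after $T$, the infimum in the definition of $T$ is still attained at $T<\infty$. What you would need is $\Delta_T>0$, i.e.\ positivity at $s=0$, which the source term $\epsilon(A_{T+s}-A_T)$ cannot supply since it vanishes there. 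The second gap is that the Gronwall/BDG step controls moments such as $\mathbb{E}[\sup_{r\leq s}\Delta_{T+r}^2\,\mathbf{1}_E]$; this can show the perturbation is \emph{typically} smaller than the source, but it does not yield the \emph{pathwise} lower bound $\Delta_{T+s}>0$ that you need, because the stochastic integral $\int_T^{T+s}[F(X)-F(Y)]_{u-}\,dZ_u$ can take large negative values on an event of small but positive probability; an averaged ``the source wins'' does not convert into an almost-sure statement.

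The proof in \cite{PP} sidesteps both difficulties by exploiting that process Lipschitz lets one write $H(X)_s-H(Y)_s=\beta_s\Delta_s$ and $F(X)_s-F(Y)_s=\alpha_s\Delta_s$ with adapted $|\alpha|,|\beta|\leq K$ (take the quotient where $\Delta\neq 0$ and zero otherwise). Then $\Delta$ solves a \emph{linear} equation with a strictly positive source $\Psi:=G(X)-H(X)>0$,
\begin{equation*}
\Delta_t=(x_0-y_0)+\int_0^t\Psi_{s-}\,dA_s+\int_0^t\Delta_{s-}\,dN_s,\qquad N_t=\int_0^t\beta_{s-}\,dA_s+\int_0^t\alpha_{s-}\,dZ_s,
\end{equation*}
and variation of constants gives the closed-form solution
\begin{equation*}
\Delta_t=\mathcal{E}(N)_t\Bigl[(x_0-y_0)+\int_0^t\mathcal{E}(N)_s^{-1}\Psi_{s-}\,dA_s\Bigr].
\end{equation*}
Since $A$ and $Z$ are continuous, $N$ is continuous, so the Dol\'eans-Dade exponential $\mathcal{E}(N)$ is strictly positive, and the bracket is $\geq x_0-y_0>0$; hence $\Delta_t>0$ for every $t$, with no localization, no Gronwall iteration, and no contradiction argument. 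The strictness of $G>H$ enters only through $\Psi>0$, and continuity of $A,Z$ enters only through $\mathcal{E}(N)>0$. You correctly observe that the statement as printed implicitly requires $x_0>y_0$ (or the conclusion restricted to $t>0$), which is how the result appears in \cite{PP}.
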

 Now we may begin the proof of Theorem~\ref{t1}.
 
 \begin{proof}[Proof of Theorem~\ref{t1}]
That $S$ is positive follows from Lemma~\ref{ell1}. Let us begin using the framework $(P,(\mathcal{F}_t)_{0\leq t\leq T})$. Notice that the condition  
\begin{equation*} \limsup_{x \to +\infty} \frac{\rho\ x\mu(x)+b(x)}{x} \textless \infty\  
\end{equation*} 
is sufficient to show that the solution to the SDE 
 \begin{eqnarray} dS_t&=&S_t v_tdB_t\\
 dv_t&=&\mu(v_t)dW_t +b(v_t)dt  
 \end{eqnarray}
is a true martingale (Lions and Musiela~\cite[Theorem 2.4(i)]{L-M}) and that the condition  
\begin{equation*} 
\liminf_{x \to +\infty}(\rho\ x\mu(x)+b(x) +\epsilon \mu^{2}(x)- \epsilon \mu(x))\phi(x)^{-1} \textgreater 0\ 
\end{equation*} 
is enough to show that the solution to the SDE  
\begin{eqnarray} dS_t&=&S_t v_tdB_t\\
dv_t&=&\mu(v_t)dW_t +b(v_t)dt + \epsilon \mu^{2}(v_t)dt -\epsilon \mu(v_t)dt
\end{eqnarray}
is a strict local martingale, by Lions and Musiela~\cite[Theorem 2.4 , (ii)]{L-M}. Note that $S$ is a nonnegative supermartingale, so its expectation is non-increasing with time.
    
Define a sequence of stopping times $T_n$ by $\inf \{t: v_t \geq n\},$. We have that the stopped process $S_{t \wedge \tau \wedge T_n}$ is a martingale.  The stopping time $T_\infty$ is the explosion time of $v.$ Therefore, we may write $$S_0=E[S_{t \wedge \tau \wedge T_n}]= E[S_{t \wedge \tau} 1_{\{t \wedge \tau \textless T_n\}}]+ E[S_{T_n} 1_{\{T_n \leq t \wedge \tau\}}].$$  Since $E[S_{t \wedge \tau} 1_{\{t \wedge \tau \textless T_n\}}]$ converges to $E[S_{t \wedge\tau}]$, we would have that $E[S_{t \wedge \tau}]  \textless S_0$ for all $t$ if we can show that  $\liminf_{n \to +\infty} E[S_{T_n} 1_{\{T_n \leq t \wedge \tau\}}] \textgreater 0.$ 
   
   We have: $ E[S_{T_n} 1_{\{T_n \leq t \wedge \tau\}}] =\hat{P}(T_n \leq t \wedge \tau)$ where under $\hat{P},$ $v$ solves $$dv_t=\mu(v_t)dW_t+b(v_t)dt+ \rho \mu(v_t)v_tdt+ k^{L}_t\mu^{2}(v_t)+(\rho H_t+J_t)\mu(v_t)dt$$
   
Now, the condition   
 \begin{equation*} \liminf_{x \to +\infty}(\rho\ x\mu(x)+b(x)+ \min(\gep^{(1)}, \gep^{(2)}) \mu^{2}(x)-\max(\gep^{(1)}, \gep^{(2)})\mu(x))\phi(x)^{-1} \textgreater 0\ \end{equation*} 
 is sufficient to guarantee that the explosion time of the stochastic differential equation  
\begin{equation*}
  dv_t=\mu(v_t)dW_t +b(v_t)dt + \rho v_t \mu(v_t)+ \min(\gep^{(1)}, \gep^{(2)}) \mu^{2}(v_t)dt-\max(\gep^{(1)}, \gep^{(2)})\mu(v_t)dt
    \end{equation*}
 can be made as small as we wish in probability under the measure $P$, an hence as well under the measure $\hat{P}$, since $\hat{P}$ is absolutely continuous with respect to $P$, and convergence in probability under $P$ therefore implies convergence in probability under $\hat{P}$.
 
 It is easy to see that the comparison theorem stated above implies that the solution to the SDE $$dv_t=\mu(v_t)dW_t+b(v_t)dt+ \rho \mu(v_t)v_tdt+ k^{L}_t\mu^{2}(v_t)+(\rho H_t+J_t)\mu(v_t)dt$$ is $Q$ almost surely greater than or equal to that of the SDE  $$dv_t=\mu(v_t)dW_t +b(v_t)dt +\min(\gep^{(1)}, \gep^{(2)}) \mu^{2}(v_t)dt-\max(\gep^{(1)}, \gep^{(2)}) \mu(v_t)dt$$ for all $t \in [0, \tau].$ Thus since the explosion time of $v$ in the SDE  
\begin{eqnarray*} 
dS_t&=&S_t v_tdB_t\\
dv_t&=&\mu(v_t)dW_t +b(v_t)dt + \min(\gep^{(1)}, \gep^{(2)}) \mu^{2}(v_t)dt-\max(\gep^{(1)}, \gep^{(2)})\mu(v_t)dt 
\end{eqnarray*}
can be made as small as possible, the explosion time $T_\infty$ of $v$ in the SDE
\begin{eqnarray*}
dS_t&=&S_t v_tdB_t\\
dv_t&=&\mu(v_t)dW_t+b(v_t)dt+ \rho \mu(v_t)v_tdt+ k^{L}_t\mu^{2}(v_t)+(\rho H_t+J_t)\mu(v_t)dt\end{eqnarray*}
 can be made as small as possible as well. 
 
 This means that, for all $t$, we have $\hat{P}(T_\infty \leq t \wedge \tau) \textgreater 0$.  This implies that for all $t$ we have $$E[S^{\tau}_{t}]=\textless S_0,$$ implying that $S_t$ is a local martingale that is not a martingale, and hence a strict local martingale.     
 \end{proof}

 \begin{remark}
 It can be checked that the functions $\mu(x)=x$ and $b(x)=x-\rho x^{2}$ satisfy the criteria  
 
  \begin{equation*}  \limsup_{x \to +\infty} \frac{\rho\ x\mu(x)+b(x)}{x} \textless \infty\  \end{equation*} 

 \begin{equation*} \liminf_{x \to +\infty}(\rho\ x\mu(x)+b(x) +\min(\gep^{(1)}, \gep^{(2)})\mu^{2}(x)-\max(\gep^{(1)}, \gep^{(2)})\phi(x)^{-1} \textgreater 0\ \end{equation*} 
 
 In fact, for $k \geq 1$, the functions  $\mu(x)=x^{k}$ and $b(x)=x-\rho x^{k+1}$ work as well, \textit{if $\rho$ is positive}. The reason we need $\rho$ to be positive is that we need the following condition on the drift, in order for it to have a non-exploding, positive solution: $$b(0) \geq 0$$ $$b(x) \leq C(1+x)$$ for some $C \geq 0.$
 
 Thus, if we work with an SDE with such diffusion and drift coefficients, we begin with a true martingale and end up with a strict local martingale, due to initial expansions.
 
 \end{remark}
 
 Indeed, one can check using Feller's test for explosions (see, for example, ~\cite{KS}), that the SDE
$$
dv_t=v^{k}_tdW_t+(v_t-\rho v^{k+1}_t)dt
$$ 
does not explode (in other words, that the time of explosion is infinite, almost surely): If we assume our state space for $v$ to be $(-\infty, +\infty)$, we need only to show that the scale function, $$p(x)=\int_{c}^{x}e^{\{-2\int_{c}^{\psi}\frac{b(y)}{\mu^{2}(y)}dy\}}d\psi$$ satisfies the following: $$p(-\infty)=-\infty$$ $$p(\infty)=\infty$$ In the above expression for the scale function, $\mu(x)$ is the diffusion coefficient, and $b(x)$ is the drift. If we take $\mu(x)=x$, and $b(x)=x-\rho x^{2}$, a quick computation shows that indeed $$p(-\infty)=-\infty$$ and $$p(\infty)=\infty.$$

 \begin{remark} One should note that in the case that the random variable $L$ is independent of the sigma algebra generated by by the process $S=(S_t)_{0\leq t\leq T}$, we have the process $k^{L}$ identically equal to zero. This is because the decomposition of the process $S$ does not change under an expansion of filtrations, and the martingale nature of solutions doesn't change.   
 \end{remark} 

Before we continue, we must ensure that the subprobability measure $Q$ defined above is a \textit{true} probability measure. Let us begin by defining the sequence of probability measures $Q_m$ by $$dQ_m=Z_{T\wedge T_m}dP$$ where $T_m=  \inf\{t: \int_{0}^{t} (H^{2}_s+J^{2}_s+2\rho J_sH_s)ds \ge h(m)\},$ for some function $h$. We then have
  
$$E[e^{\frac{1}{2}\int_{0}^{t \wedge T_m}(H^{2}_s+J^{2}_s+2\rho J_sH_s)ds}] \leq e^{\frac{1}{2}h(m)} \textless \infty.$$ Recall that the relation $$k^{L}_t(S_tv_t)^{2}= -(S_tv_t)H_t -\rho J_t$$ holds true for all $t \geq 0.$

So we have $Q_m \ll P$ on $[0,T_m]$ for each $m,$ as well as that the $Q_m$ are true probability measures, since $Z_{t}^{T_m}$ is a true $\mathbb{G}$ martingale. 
  
Note that if $\{Z_{T \wedge T_m}\}_m$ is a uniformly integrable martingale, then $Q$ is equivalent to $P$ on $[0,T].$ This is because the uniform integrability of $(Z^{T_{m}})_{m}$ ensures the $L^{1}$ convergence of $Z^{T_m}$, i.e. $$\lim_{m \to \infty}E[Z_{t \wedge T_m}]=E[Z_{t \wedge \widetilde{T}}]=1,$$  where $\widetilde{T}=\lim_{m \to \infty}T_m.$ It is assumed that $\widetilde{T} \geq T.$ So we obtain that, for all $t$ in the interval $[0,T]:$ $E[Z_{t \wedge \widetilde{T}}]=E[Z_t]=1.$ Thus, $Q$ is equivalent to $P$ on $[0,T].$ 
  
\subsection{A Slightly More General Model}\label{alpha}
We next perform a similar analysis for the following case: 
\begin{eqnarray} 
dS_t&=&S_t^{\beta} v_t^{\delta}dB_t\\
  dv_t&=&\alpha v_t^{\gamma}dW_t +b(v_t)dt  
    \end{eqnarray}
  Here, we make the following assumptions and restrictions on the parameters and functions: $\alpha$, $\gamma$, $\beta$, and $\delta$ are all positive, $b(0) \geq 0$, $b$ is Lipschitz on $[0, \infty)$ and satisfies, for all $x,$ $$b(x) \leq C(1+x)$$ In addition, we assume that $$\mu(0)=0,$$ $$\mu(x) \textgreater 0, x \textgreater 0,$$ and lastly that $\mu$ is locally Lipschitz.

Next we note that if $\beta \textless 1$, we have that the process $S$ is a true martingale possessing moments of all orders. The interesting case is when $\beta \geq 1,$ and assume no further restrictions on $\gamma,$ since with the conditions specified on $b,$ the above system of stochastic differential equations will not explode.  The details of this case are almost identical to that of the previous case, and we omit most of them here. We work with a new probability measure and enlarged filtration $(Q, \mbg).$ The measure $Q$ is defined by $E[\frac{dQ}{dP}]\mathcal{G}_t]=Z_t$. We choose $Q$ such that it is a local martingale measure for $S.$
Writing 
$$
Z_t=1+ZH\cdot B_t +ZJ \cdot W_t,
$$ 
where $\cdot$ represents stochastic integration, for predictable processes $J$ and $H$, and recalling that $d[B,W]_t=\rho dt$, we arrive at the $(Q, \mbg)$ decomposition for the volatility after doing a calculation very similar to that done for the previous model:
\begin{eqnarray*}
v_t&=&v_0+\int_{0}^{t}\alpha v^{\gamma}_sdW_s-\int_{0}^{t}\alpha^{2}k^{L}_sv^{s\gamma}ds-\int_{0}^{t}(\alpha v^{\gamma}_sH_s\rho+\alpha J_sv^{\gamma}_s)ds+ \int_{0}^{t}b(v_s)ds\\
&& +\int_{0}^{t}\alpha^{2}k^{L}_sv^{2\gamma}_sds+ \int_{0}^{t}(\alpha v^{\gamma}_sH_s\rho+\alpha J_sv^{\gamma}_s)ds.
\end{eqnarray*} 
  
 Our new drift derivative, then, call it $\hat{b}(v_t)$ satisfies $$\hat{b}(v_t)=b(v_t)+\alpha^{2}k^{L}_tv^{2\gamma}_t+ v^{\gamma}_t(\alpha H_t\rho+\alpha J_t)$$
 
 Define the random times $$\tau^{k}= \inf \{t: |\alpha^{2} k^{L}_t| \textless \gep^{1}\}$$   $$\tau^{J,H}=\inf \{t: |\alpha H_t\rho+\alpha J_t |\textgreater \gep^{2}\}.$$ Define the stopping time $\tau$ to be 
  \begin{equation}\label{tau'}
  \tau=(\tau^{k} \wedge \tau^{J,H})
  \end{equation}  
 
Proceeding, we have, on the stochastic interval $[0,\tau],$ the following lower bound on our drift: $$\hat{b}(v_t) \geq b(v_t)+\min(\gep^{(1)}, \gep^{(2)}) v^{2\gamma}_t-\max(\gep^{(1)}, \gep^{(2)})v^{\gamma}_t$$
  
 Let us recall the conditions of Lions and Musiela~\cite{L-M} on the coefficients and parameters of this system of stochastic differential equations
 \begin{eqnarray*} dS_t&=&S_t^{\beta} v_t^{\delta}dB_t\\
  dv_t&=&\alpha v_t^{\gamma}dW_t +b(v_t)dt  
    \end{eqnarray*}
such that $S$ is a martingale:
$\rho \textgreater 0,$ $\gamma + \delta \textgreater 1$ and \\
 $$ \limsup_{x \to +\infty} \frac{\rho\alpha x^{\gamma+\delta}+b(x)}{x} \textless \infty $$ 

Let us also recall the conditions on the coefficients and parameters of this system such that the process $S$ is a \textit{strict} local martingale: \\ 
$\rho \textgreater 0,$ $\gamma + \delta \textgreater 1$ and there exists $\phi(x),$ an increasing, positive, smooth function that satisfies
 \begin{equation*}   \int_{a}^{\infty} \frac{1}{\phi(x)}dx \textless \infty, \end{equation*} 
 where $a$ is some positive constant, and
 $$  \liminf_{x \to +\infty} \frac{\rho\alpha x^{\gamma+\delta}+b(x)}{\phi(x)} \textgreater 0$$

Our discussion has given rise to the following theorem:

\begin{theorem}\label{t2}
Assume the Standing Assumptions given in Hypothesis~\ref{SA}. Let $L$ be a random variable with a density. Assume that the following conditions are satisfied: 
 $$ \limsup_{x \to +\infty} \frac{\rho \alpha x^{\gamma+\delta}+b(x)}{x} \textless \infty $$ 
and $$  \liminf_{x \to +\infty} \frac{\rho\alpha x^{\gamma+\delta}+b(x) +\min(\gep^{(1)}, \gep^{(2)})x^{2\gamma}  -\max(\gep^{(1)}, \gep^{(2)})x^{\gamma}}{\phi(x)} \textgreater 0$$ 
Let $W$ and $B$ be correlated Brownian motions with correlation $\rho.$ Assume that  $\rho \textgreater 0$ and that $\gamma + \delta \textgreater 1.$ Let the process $S$ be the unique strong solution of the SDE 
 \begin{eqnarray}
dS_t&=&S_t^{\beta} v_t^{\delta}dB_t\\
  dv_t&=&\alpha v_t^{\gamma}dW_t +b(v_t)dt  
    \end{eqnarray}
   on $( P,\mbf)$. The solution $S$ is also the solution of
\begin{eqnarray} 
dS_t&=&S_t^{\beta} v_t^{\delta}dB_t\\
dv_t&=&b(v_s)dt +\alpha^{2}k^{L}_tv^{2\gamma}_tdt+ (\alpha v^{\gamma}_tH_t\rho+\alpha J_tv^{\gamma}_t)dt \end{eqnarray} 
on $(Q, \mbg).$
 
Then $S$ is positive, is a $( P,\mbf)$ martingale and a $(Q, \mbg)$ \textit{strict} local martingale on the stochastic interval $[0,\tau]$, where $\tau$ is given in~\eqref{tau'}. More specifically, we have $E[S^{\tau}_t] \textless S_0.$
  In the above, $\phi$ is an increasing, positive, smooth function that satisfies 
 \begin{equation*}  
  \int_{a}^{\infty} \frac{1}{\phi(x)}dx \textless \infty 
  \end{equation*} where $a$ is some positive constant. 
 \end{theorem}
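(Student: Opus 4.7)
The plan is to mirror the proof of Theorem~\ref{t1} step by step, substituting the new coefficients $(\alpha,\gamma,\beta,\delta)$ for the original $(1,1,1,1)$ setup. Positivity of $S$ follows from an immediate variant of Lemma~\ref{ell1}: the integrand $S_s\gs(S_s)\nu_s$ is replaced by $S_s^\beta v_s^\delta$, and applying It\^o's formula to $\ln|S_{R_n}|$ as in that proof gives $P(V<\infty)=0$ in the regime $\beta\geq 1$ that is the only interesting regime (the remark following the theorem disposes of $\beta<1$ separately). The true-martingale property of $S$ under $(P,\mbf)$ then follows directly from Lions--Musiela~\cite[Theorem 2.4(i)]{L-M} together with the hypotheses $\rho>0$, $\gamma+\delta>1$, and the $\limsup$ condition.

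Next I would invoke the $(Q,\mbg)$-dynamics of $v$ computed just before the theorem statement, namely that the new drift is $\hat b(v_t)=b(v_t)+\alpha^2 k^L_t v_t^{2\gamma}+v_t^\gamma(\alpha\rho H_t+\alpha J_t)$. The definitions of $\tau^k$ and $\tau^{J,H}$ yield, on the stochastic interval $[0,\tau]$, the pointwise lower bound
\[
\hat b(v_t)\ \geq\ b(v_t)+\min(\gep^{(1)},\gep^{(2)})v_t^{2\gamma}-\max(\gep^{(1)},\gep^{(2)})v_t^\gamma,
\]
which is exactly the summand that, together with the Lions--Musiela correlation term $\rho\alpha x^{\gamma+\delta}$, appears in the $\liminf$ hypothesis of the theorem.

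The core of the argument is the localization $T_n=\inf\{t:v_t\geq n\}$ combined with the decomposition $S_0=E[S_{t\wedge\tau\wedge T_n}]=E[S_{t\wedge\tau}\1{t\wedge\tau<T_n}]+E[S_{T_n}\1{T_n\leq t\wedge\tau}]$, valid because $S^{\tau\wedge T_n}$ is a true martingale by the boundedness of $v$ on $[0,T_n]$. The first term increases to $E[S_{t\wedge\tau}]$ by monotone convergence, so one needs to show $\liminf_n E[S_{T_n}\1{T_n\leq t\wedge\tau}]>0$. Changing to the subprobability measure $\hat P$ with density $S/S_0$, this expectation becomes $\hat P(T_n\leq t\wedge\tau)$; under $\hat P$, the Girsanov correction contributes an extra $\rho\alpha x^{\gamma+\delta}$-type drift to $v$, and the resulting drift on $[0,\tau]$ dominates that of the autonomous SDE
\[
dv_t=\alpha v_t^\gamma dW_t+\bigl[b(v_t)+\rho\alpha v_t^{\gamma+\delta}+\min(\gep^{(1)},\gep^{(2)})v_t^{2\gamma}-\max(\gep^{(1)},\gep^{(2)})v_t^\gamma\bigr]dt.
\]
The $\liminf$ hypothesis combined with Lions--Musiela~\cite[Theorem 2.4(ii)]{L-M} makes the explosion time of this autonomous SDE arbitrarily small in probability under $P$, hence under $\hat P$ by absolute continuity, and the Comparison Theorem~\ref{comp} transfers this to $v$ itself on $[0,\tau]$, giving the required uniform-in-$n$ lower bound and therefore $E[S_t^\tau]<S_0$.

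The principal obstacle, as in the first model, lies in handling the Girsanov correction when passing from $Q$ to $\hat P$: because $\beta\neq 1$ in general, the density $S/S_0$ is not a stochastic exponential, and the naive drift correction to $v$ carries an $S^{\beta-1}$ factor that is not autonomous in $v$. The resolution is to invoke the Lions--Musiela criterion at the level of the $v$-dynamics alone, where they have already absorbed this $S$-dependence into the autonomous term $\rho\alpha x^{\gamma+\delta}$ appearing in their hypotheses; comparison is then carried out between autonomous SDEs on the state space of $v$, which is essentially the same bookkeeping as in the proof of Theorem~\ref{t1} where one replaces $\rho v\mu(v)$ by its state-variable avatar $\rho x\mu(x)$.
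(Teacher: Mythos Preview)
Your proposal is correct and follows essentially the same route as the paper: positivity via (the variant of) Lemma~\ref{ell1}, the localization $T_n=\inf\{t:v_t\ge n\}$, the decomposition $S_0=E[S_{t\wedge\tau}\1{t\wedge\tau<T_n}]+E[S_{T_n}\1{T_n\le t\wedge\tau}]$, passage to $\hat P$ with the extra $\rho\alpha v^{\gamma+\delta}$ drift, and the Comparison Theorem against the autonomous Lions--Musiela SDE driven by the $\liminf$ hypothesis. Your closing paragraph on the $S^{\beta-1}$ factor in the Girsanov correction when $\beta\neq 1$ flags a subtlety the paper's own proof does not explicitly address (it simply writes the correction to $v$ as $\rho v^{\gamma+\delta}dt$), so you are if anything more careful here than the original.
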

 
 \begin{proof}
 
That $S$ is positive follows from Lemma~\ref{ell1}.  Defining the sequence of stopping times $T_n$ = $\inf \{t: v_t \geq n\},$ we have that the stopped process $S_{t \wedge \tau \wedge T_n}$ is a martingale.  The stopping time $T_\infty$ is the explosion time of $v.$  Therefore, we may write $$S_0=E[S_{t \wedge \tau \wedge T_n}]= E[S_{t\wedge \tau} 1_{\{t \textless T_n\}}]+ E[S_{T_n} 1_{\{T_n \leq t \wedge \tau\}}].$$  Since $E[S_{t \wedge \tau} 1_{\{t \textless T_n\}}]$ increases to $E[S_{t \wedge \tau}]$ as $n \rightarrow \infty$, we would have that $E[S_{t \wedge \tau}] \textless S_0$ for all $t$ if we can show that  $\liminf_{n \to +\infty} E[S_{T_n} 1_{\{T_n \leq t \wedge \tau\}}] \textgreater 0.$ 
   
   We have: $ E[S_{T_n} 1_{\{T_n \leq t \wedge \tau\}}] =\hat{P}(T_n \leq t \wedge \tau)$ where under $\hat{P},$ $v$ solves $$dv_t= \alpha v_t^{\gamma}dW_t +b(v_t)dt+\alpha^{2}k^{L}_tv^{2\gamma}_tdt+ v^{\gamma}_t(\alpha H_t\rho+\alpha J_t)dt+\rho{ v^{\gamma+\delta}_t}dt$$
   
   Now the condition $$  \liminf_{x \to +\infty} \frac{\rho\alpha x^{\gamma+\delta}+b(x) +\min(\gep^{(1)}, \gep^{(2)}) x^{2\gamma} -\max(\gep^{(1)}, \gep^{(2)})x^{\gamma}}{\phi(x)} \textgreater 0$$ is sufficient to guarantee that the explosion time of the SDE $$dv_t=\alpha v^{\gamma}_tdW_t+b(v_t)dt+\alpha^{2}+\min(\gep^{(1)}, \gep^{(2)}) v^{2\gamma}_t dt-\max(\gep^{(1)}, \gep^{(2)})v^{\gamma}_t dt +\rho{ v^{\gamma+\delta}_t}$$ can be made as small as we wish. 
   
   Thus we can invoke the comparison lemma and conclude that that the explosion time of the solution of the SDE $$dv_t=b(v_t)dt+\alpha^{2}k^{L}_tv^{2\gamma}_tdt+ v^{\gamma}_t(\alpha H_t\rho+\alpha J_t)dt+\rho{ v^{\gamma+\delta}_t}dt $$ can be made as small as possible. This means that we have, for any $t \textgreater 0,$ we have $$\liminf_{n \to +\infty} \hat{P}(T_n \leq t \wedge \tau) \textgreater 0.$$ This implies that for all $t$ we have $$E[S^{\tau}_{t}]=\textless S_0,$$ implying that $S_t$ is a local martingale that is not a martingale, and hence a strict local martingale.

\end{proof}

\begin{remark}
If we assume that there exists an $\epsilon \textgreater 0$ such that $\gamma \geq \frac{1+\epsilon}{2},$ we can use $\phi(x)=x^{1+\epsilon},$ and one can easily check that the following forms of $b(x)$ satisfy

$$ \limsup_{x \to +\infty} \frac{\rho \alpha x^{\gamma+\delta}+b(x)}{x} \textless \infty $$ and $$  \liminf_{x \to +\infty} \frac{\rho\alpha x^{\gamma+\delta}+b(x) +\min(\gep^{(1)}, \gep^{(2)}) x^{2\gamma} -\max(\gep^{(1)}, \gep^{(2)}) x^{\gamma}}{\phi(x)} \textgreater 0:$$ 

\begin{eqnarray*}
b(x)&=&K\ln(x)-\rho \alpha x^{\gamma+\delta}\\
b(x)&=&K\sin(x)-\rho \alpha x^{\gamma+\delta}\\
b(x)&=&Ke^{-ax}-\rho \alpha x^{\gamma+\delta}\\
b(x)&=&Kx^{m}-\rho \alpha x^{\gamma+\delta}
\end{eqnarray*}

In the above, $K$ and $a$ are positive constants, and $m$ is a constant satisfying $m \leq 1.$\\
\end{remark}

Before we continue, we must ensure that the sub-probability measure $Q$ defined above is a \textit{true} probability measure. Let us begin by defining the sequence of probability measures $Q_m$ by $$dQ_m=Z_{T\wedge T_m}dP$$ where $T_m=  \inf\{t: \int_{0}^{t} (H^{2}_s+J^{2}_s+2\rho J_sH_s)ds \ge h(m)\},$ for some function $h$. We then have
  
  $$E[e^{\frac{1}{2}\int_{0}^{t \wedge T_m}(H^{2}_s+J^{2}_s+2 \rho J_sH_s)ds}] \leq e^{\frac{1}{2}h(m)} \textless \infty.$$ In this case $H$ and $J$ must satisfy $$ k^{L}_tS^{2}_tv^{2\delta}_t+H_tS_tv^{2\delta}_t+\rho J_tS_tv^{\delta}=0$$ for all  $t \geq 0$ since we have assumed $Q$ to be a local martingale measure for $S.$

And so, we have $Q_m \ll P$ on $[0,T_m]$ for each $m,$ as well as that the $Q_m$ are true probability measures, since $Z^{T_m}_t$ is a true $\mathbb{G}$ martingale. 
  
Note that if $\{Z_{T \wedge T_m}\}_m$ is a uniformly integrable martingale, then $Q$ is equivalent to $P$ on $[0,T].$ This is because the uniform integrability of $(Z^{T_{m}})_{m}$ ensures the $L^{1}$ convergence of $Z^{T_m}$, i.e. $$\lim_{m \to \infty}E[Z_{t \wedge T_m}]=E[Z_{t \wedge \widetilde{T}}]=1,$$  where $\widetilde{T}=\lim_{m \to \infty}T_m.$ It is assumed that $\widetilde{T} \geq T.$ So we obtain that, for all $t$ in the interval $[0,T]:$ $E[Z_{t \wedge \widetilde{T}}]=E[Z_t]=1.$ Thus, $Q$ is equivalent to $P$ on $[0,T].$

 \section{The Discontinuous Case}\label{s3}
 
Let us now turn to the discontinuous case. That is, we assume that $S$ and $v$ follow SDEs of the form: 
\begin{eqnarray}\label{disceq}
dS_t&=&S_{t-} v_t^{\alpha}dM_t\\
  dv_t&=&\mu(v_t)dB_t +b(v_t)dt  
    \end{eqnarray}
    
We will assume that $\mu$ and $b$ are $C^{\infty}$ functions on $[0,\infty)$ and that $\mu$ is Lipschitz continuous on $[0,\infty)$ such that: 
\begin{eqnarray*}
\mu(0)&=&0\\
b(0) &\geq& 0\\
 \mu(x) \textgreater 0&\text{ if }&x>0\text{ and }\mu(x)=x\tilde{\mu}(x)\\
 b(x)&\leq& C(1+x)\text{ and }b(x)=x\tilde{b}(x)
 \end{eqnarray*}     
Note that the assumptions that $\mu$ and $b$ factor as $\mu(x)=x\tilde{\mu}(x)$ and $b(x)=x\tilde{b}(x)$ ensures a positive solution of the equation for $v$ in~\eqref{disceq}, even though it seems always true; but it is not, since we also require $\mu$ to be Lipschitz, and even if $\tilde{\mu}$ is Lipschitz, the function $x\mu(x)$ need be only locally Lipschitz. 

We assume $\alpha$ to be positive. In the above, $B$ is a standard Brownian motion and $M$ is a discontinuous martingale such that $[M,M]$ is locally in $L^{1}$ and such that $d \langle M,M \rangle_t = \lambda_t dt.$  Let us note that the conditions imposed on the coefficients $b$ and $\mu$ of the volatility are sufficient to ensure the existence and uniqueness of a nonnegative solution $v_t$ such that $E[\sup_{t \in [0,T]}|v^{p}_t|] \textless \infty$ for $1 \leq p \leq \infty.$ Last we assume that the processes $v$ and $M$ satisfy: 
\bee\label{16e1}
\Delta( \int_{0}^{t}v^{\alpha}_sdM_s) \textgreater -1,
\eee
 i.e., for all  $t$, ${v_t}_{-}^{\alpha} \Delta(M_t) \textgreater -1.$ (We are using the standard notation that for a c\`adl\`ag process $X$ that $\Delta X_t=X_t-X_{t-}$, the jump of $X$ at time $t$.) The above condition~\eqref{16e1} ensures that $S$ remains positive for all $t \geq 0.$ 

Let us proceed to expand the filtration $\mbf$ to obtain $\mbg$ by an initial expansion, and compute the canonical expansion of $S$ under $(P,\mbg).$ We obtain the canonical decomposition of the process $S$ under $\mbg$ via the theory of Jacod~\cite{JAC}. (The reader can consult~\cite[Chapter VI]{PP} for a pedagogic treatment of the subject.) Jacod proves the existence of an $\mbf$ predictable process $k^{L}_t$ such that
 \begin{equation*} \langle q^{L},S \rangle=k^{L}q^{L}_{-} \cdot \langle S,S\rangle  \end{equation*}
The function $k^{L}_t$ satisfies $k^{L}_t=\frac{h^{L}}{q^{L}}$ if $q^{L} \textgreater 0$ and $k^{L}_t=0$ otherwise. In the above, $h^{L}_t$ is the density process such that we have \begin{equation*} d \langle q^{L}_t,S \rangle_t=h^{L}_td\langle S,S \rangle_t \end{equation*}
Jacod's theorem also tells us that the following process is a $\mbg$ local martingale: 
\begin{equation*} \widetilde{S_t}=S_t-\int_{0}^{t}k^{L}_td {\langle S,S \rangle}_t \end{equation*}

We obtain: 
    
\begin{eqnarray*}    
S_t&=&S_0+\int_{0}^{t}S_{s-}v^{\alpha}_sdM_s- \int_{0}^{t}k^{L}_sS^{2}_sv^{2\alpha}_s\lambda_sds+ \int_{0}^{t}k^{L}_sS^{2}_sv^{2\alpha}_s\lambda_sds\\
v_t&=&v_0+\int_{0}^{t}\mu(v_s)dB_s -\int_{0}^{t}k^{L}_s\mu(v_s)^{2}ds +\int_{0}^{t}b(v_s)ds+\int_{0}^{t}k^{L}_s\mu(v_s)^{2}ds
\end{eqnarray*}

Here $\int_{0}^{t}S_{s-}v^{\alpha}_sdM_s- \int_{0}^{t}k^{L}_sS^{2}_sv^{2\alpha}_s\lambda_sds$ and $\int_{0}^{t}\mu(v_s)dB_s -\int_{0}^{t}k^{L}_s\mu(v_s)^{2}ds$ are $(P,\mbg)$ local martingales, and $ \int_{0}^{t}k^{L}_sS^{2}_sv^{2\alpha}_s\lambda_sds$ and $\int_{0}^{t}b(v_s)ds+\int_{0}^{t}k^{L}_s\mu(v_s)^{2}ds$ are finite variation processes.

We perform a Girsanov transform, to switch to a probability measure $Q$ which is equivalent to $P$, under which $S$ is a local martingale. We can do this as long as we assume the condition~\eqref{16e2}, given in Theorem~\ref{t7} (below). As in the previous cases, let $Z_t=E[\frac{dQ}{dP}|\mathcal{G}_t].$ Writing 
$$Z_t=1+ZH\cdot B_t +ZJ \cdot M_t,$$ 
where $\cdot$ represents stochastic integration, for $\mathbb{G}$ predictable processes $J_t$ and $H_t$, we have the following decompositions for $S$ and $v$ under $(Q, \mbg):$
\begin{eqnarray*} 
S_t&=&S_0+\int_{0}^{t}S_{s-}v^{\alpha}_sdM_s-\int_{0}^{t}k^{L}_sS^{2}_sv^{2\alpha}_s\lambda_sds -\int_{0}^{t}(\lambda_s H_sS_sv_s^{\alpha}+\rho J_sS_sv_s^{\alpha})ds\\  &&+\int_{0}^{t}k^{L}_sS^{2}_sv^{2\alpha}_s\lambda_sds +\int_{0}^{t}(\lambda_sH_sS_sv_s^{\alpha}+\rho J_sS_sv^{\alpha}_s)ds \end{eqnarray*}

\begin{eqnarray*}
v_t&=&v_0+\int_{0}^{t}\mu(v_s)dB_s -\int_{0}^{t}k^{L}_s\mu(v_s)^{2}ds -\int_{0}^{t}(H_s\rho \mu(v_s)+J_s\mu(v_s))ds\\ 
&&+\int_{0}^{t}b(v_s)ds+\int_{0}^{t}(H_s\rho \mu(v_s)+J_s\mu(v_s))ds +\int_{0}^{t}k^{L}_s\mu(v_s)^{2}ds \end{eqnarray*}

Since we have assumed that under $(Q,\mbg),$ $S$ is a local martingale, we set the finite variation term in its decomposition to zero: 

\bee\label{nne10}\lambda_tk^{L}_tS^{2}_tv^{2\alpha}_t+\rho J_tS_tv^{\alpha}_t+\lambda_tH_tS_tv^{\alpha}_t=0 \eee

Our new drift, which we will call $\hat{b}(v_t)$, under $(Q, \mbg)$ is given by the following:  \bee\label{nne11}\hat{b}(v_t)=b(v_t)+H_t\rho \mu(v_t)+J_t\mu(v_t)+k^{L}_t\mu(v_t)^{2}.\eee

$S$ and $v$ now solve, under $(Q, \mathbb{G})$ 

\begin{eqnarray*}
dS_t&=&S_{t-}v^{\alpha}_tdM_t+k^{L}_tS^{2}_tv^{2\alpha}_t\lambda_tdt +(\lambda_tH_tS_tv_t^{\alpha}+\rho J_tS_tv^{\alpha}_t)dt \\
dv_t&=&\mu(v_t)dB_t +b(v_t)dt+k^{L}_t\mu(v_t)^{2}dt+(H_t\rho \mu(v_t)+J_t\mu(v_t))dt 
\end{eqnarray*}

Let us remark that in this case, the following relation holds: \begin{equation*} \langle q^{L}, S \rangle =\int_{0}^{t}k^{L}_sq^{L}_sS^{2}_sv^{2\alpha}_s\lambda_sds  \end{equation*} Returning to the decomposition of the volatility we just arrived at, we again note that we can no longer represent the drift in deterministic terms as simply functions of the real variable $x$, so we cannot immediately invoke the results of Lions \& Musiela. To address this, let us fix $0 \textless \gep^{(1)} \textless k^{L}_0$ and $ |\rho H_0+J_0| \textless \gep^{(2)}$ and define the following random times: 
  
 \begin{eqnarray*}
\tau^{k}&=& \inf \{t: |k^{L}_t| \textless \gep^{(1)}\}\\   
\tau^{H,J}&=&\inf \{t: |\rho H_t+J_t| \textgreater \gep^{(2)}\}
\end{eqnarray*} 

Now define the stopping time $\tau$ to be 
  \begin{equation}\label{tau_d}
  \tau=(\tau^{k}\wedge \tau^{H,J}).
  \end{equation}

    On the stochastic interval $[0,\tau],$ we have the following lower bound on our drift coefficients: 
\begin{equation*}
\hat{b}(v_t)= b(v_t)+k^{L}_t\mu^{2}(v_t)+(\rho H_t+J_t)\mu(v_t) \geq b(v_t)+\min(\gep^{(1)}, \gep^{(2)})\mu^{2}(v_t)-\max(\gep^{(1)}, \gep^{(2)})\mu(v_t)
\end{equation*}
 
 The above discussion gives us the following result. 

From this discussion, we have arrived at the following theorem:

\begin{theorem}\label{t7}
Let $S_t$ be the strong solution under $(P, \mathbb{F})$ of 
\begin{eqnarray}\label{s_d} 
dS_t&=&S_{t-} v_t^{\alpha}dM_t\\
 dv_t&=&\mu(v_t)dB_t +b(v_t)dt  
 \end{eqnarray}
$S$ is also the solution, under $(Q, \mathbb{G})$ of \begin{eqnarray*}
dS_t&=&S_{t-}v^{\alpha}_tdM_t\\
dv_t&=&\mu(v_t)dB_t +b(v_t)dt+k^{L}_t\mu(v_t)^{2}dt+(H_t\rho \mu(v_t)+J_t\mu(v_t))dt 
\end{eqnarray*}
Assume:
\bee\label{16e2}
E[\e^{{\int_{0}^{T}v^{2\alpha}_sd\langle M^{d}, M^{d} \rangle}_s +\frac{1}{2}\int_{0}^{T}v^{2 \alpha}_sd\langle M^{c},M^{c} \rangle_s }]\textless \infty.
\eee 
Assume also that  
\begin{equation*} \liminf_{x \to +\infty}(\rho\ x\mu(x)+b(x)+\min(\gep^{(1)}, \gep^{(2)}) \mu^{2}(x)-\max(\gep^{(1)}, \gep^{(2)}) \mu(x))\phi(x)^{-1} \textgreater 0\ 
\end{equation*} Then, the process $S$ is a true $(P, \mathbb{F})$ martingale and a $(Q, \mathbb{G})$ strict local martingale. Specifically, we have $E[S^{\tau}_t] \textless S_0$ where $\tau$ is given by~\eqref{tau_d}.
\end{theorem}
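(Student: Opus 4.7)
The plan is to follow the template of the proofs of Theorems~\ref{t1} and~\ref{t2}, adapting each step to accommodate the jumps of $M$. The structural skeleton is: (i) verify $S$ is strictly positive; (ii) verify $S$ is a true $(P,\mathbb{F})$ martingale; (iii) localize $v$ via its blow-up stopping times and reduce to an explosion probability under an auxiliary measure $\hat{P}$; (iv) bound the drift of $v$ from below on $[0,\tau]$ by the Lions--Musiela drift; (v) conclude with a comparison/Feller argument.

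For (i), positivity of $S$ follows from Dol\'eans-Dade: since $S$ is a stochastic exponential of $\int_0^\cdot v_s^\alpha dM_s$ and the assumption~\eqref{16e1} forces the multiplicative jumps $1+v_{s-}^\alpha\Delta M_s$ to be strictly positive, $S$ stays strictly positive on $[0,T]$ (one uses the nonnegativity of $v$ coming from the factorizations $\mu(x)=x\tilde\mu(x)$ and $b(x)=x\tilde b(x)$, exactly as motivated before the theorem statement). For (ii), the hypothesis~\eqref{16e2} is a jump-enhanced Novikov/Lepingle--M\'emin condition that directly ensures $S=\mathcal{E}(v^\alpha\isb M)$ is a true $(P,\mathbb{F})$ martingale (see the discussion of exponential martingales in~\cite{PP}); this is the analogue of the integrability step that was automatic under the continuous Lions--Musiela hypothesis of Theorem~\ref{t1}.

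For (iii), set $T_n=\inf\{t:v_t\ge n\}$. On $[0,\tau\wedge T_n]$ the integrand $S_{s-}v_s^\alpha$ is bounded by the already-built quadratic bounds on $v$, so $S^{\tau\wedge T_n}$ is a bounded $(Q,\mathbb{G})$ local martingale, hence a true martingale. Optional stopping gives
\bee
S_0=E_Q[S_{t\wedge\tau\wedge T_n}]=E_Q\bigl[S_{t\wedge\tau}\indicator_{\{t\wedge\tau<T_n\}}\bigr]+E_Q\bigl[S_{T_n}\indicator_{\{T_n\le t\wedge\tau\}}\bigr].
\eee
The first term tends to $E_Q[S_{t\wedge\tau}]$ as $n\to\infty$ by monotone convergence. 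For the second term, define $d\hat{P}/dQ=S_{t\wedge\tau\wedge T_n}/S_0$ on $\mathcal{G}_{t\wedge\tau\wedge T_n}$, so that
\bee
E_Q\bigl[S_{T_n}\indicator_{\{T_n\le t\wedge\tau\}}\bigr]=S_0\,\hat{P}(T_n\le t\wedge\tau).
\eee
Applying Girsanov in the mixed continuous/jump setting, the dynamics of $v$ under $\hat{P}$ acquire an additional drift term of the form $\rho\,v_t^\alpha\mu(v_t)\,dt$ on top of the drift $\hat b(v_t)$ already present under $(Q,\mathbb{G})$ and displayed in~\eqref{nne11}. For (iv), on the stochastic interval $[0,\tau]$ defined in~\eqref{tau_d}, the bounds enforced by $\tau^k$ and $\tau^{H,J}$ give exactly the inequality
\bee
\hat b(v_t)\ge b(v_t)+\min(\gep^{(1)},\gep^{(2)})\mu^2(v_t)-\max(\gep^{(1)},\gep^{(2)})\mu(v_t),
\eee
so the total drift of $v$ under $\hat{P}$ on $[0,\tau]$ dominates $\rho x\mu(x)+b(x)+\min(\gep^{(1)},\gep^{(2)})\mu^2(x)-\max(\gep^{(1)},\gep^{(2)})\mu(x)$.

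For (v), the hypothesized $\liminf$ condition is precisely the Lions--Musiela/Feller criterion guaranteeing that a scalar diffusion with that drift and diffusion $\mu(x)$ explodes in arbitrarily small time with positive probability. Since $v$ is driven only by the Brownian motion $B$ (the jumps of $M$ enter $S$ but not $v$), the Comparison Theorem~\ref{comp} applies in its continuous form and transfers the explosion to $v$ under $\hat{P}$: one obtains $\liminf_n\hat{P}(T_n\le t\wedge\tau)>0$, hence $E_Q[S_{t\wedge\tau}]<S_0$, which forces $S$ to be a strict $(Q,\mathbb{G})$ local martingale. The main obstacle I expect is step (iii): carrying out the Girsanov change of measure in the mixed jump/continuous setting to identify the precise drift picked up by $v$ under $\hat{P}$, and checking that the Novikov-type bound~\eqref{16e2} suffices both for the original change to $Q$ (via $Z$) and for $S$ itself to serve as a Radon--Nikodym derivative on the stopped interval. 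Once this is in place, the rest proceeds in parallel with the proof of Theorem~\ref{t1}.
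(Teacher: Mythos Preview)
Your outline is essentially the paper's own proof: positivity from~\eqref{16e1}, the $(P,\mathbb{F})$ martingale property from the Protter--Shimbo criterion~\eqref{16e2}, localization via $T_n=\inf\{t:v_t\ge n\}$, a change of measure to $\hat P$ using $S$ as density, the drift lower bound on $[0,\tau]$, and the comparison against the Lions--Musiela explosive diffusion. Your remark that the Comparison Theorem applies in its continuous form because $v$ is driven only by $B$ is a clarifying point the paper leaves implicit.

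One slip to fix in step (iii): you assert that $S^{\tau\wedge T_n}$ is a \emph{bounded} local martingale because the integrand $S_{s-}v_s^\alpha$ is bounded. This is false on two counts: bounding $v$ does not bound $S_{s-}$, and even a stochastic exponential with bounded integrand is not a bounded process. The paper simply asserts that $S^{\tau\wedge T_n}$ is a martingale; the honest justification is that with $v\le n$ on the stopped interval the exponent in~\eqref{16e2} is trivially finite, so the same Lepingle--M\'emin/Protter--Shimbo criterion gives the martingale property of the stopped exponential. Also, your extra drift $\rho v_t^\alpha\mu(v_t)$ under $\hat P$ is the form one actually obtains from Girsanov here (the paper writes $\rho v_t\mu(v_t)$, evidently carried over from the $\alpha=1$ continuous case); this does not affect the argument, since the comparison step only needs the drift to dominate the deterministic Lions--Musiela profile appearing in the $\liminf$ hypothesis.
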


\begin{proof} [Proof of Theorem~\ref{t7}]

First note that the strong assumption given in~\eqref{16e1} ensures that $S_{-}$ is positive. From~\cite{PhilipKazu}, a sufficient condition for the solution $S$ of $dS_t=S_tv_t^{\alpha}dM_t.$ be a martingale on $[0,T]$ is that $E[\e^{{\int_{0}^{T}v^{2\alpha}_sd\langle M^{d}, M^{d} \rangle}_s +\frac{1}{2}\int_{0}^{T}v^{2 \alpha}_sd\langle M^{c},M^{c} \rangle_s }]\textless \infty.$ (In Remark~\ref{16r1} following this proof we present an alternative condition.)
 
Let us now display sufficient conditions for the solution $S$ of~\eqref{s_d} under $(Q, \mathbb{G})$ to be a strict local martingale.

Define a sequence of stopping times $T_n$ by $\inf \{t: v_t \geq n\},$. We have that the stopped process $S_{t \wedge \tau \wedge T_n}$ is a martingale.  The stopping time $T_\infty$ is the explosion time of $v.$ Therefore, we may write $$S_0=E[S_{t \wedge \tau \wedge T_n}]= E[S_t 1_{\{t \wedge \tau \textless T_n\}}]+ E[S_{T_n} 1_{\{T_n \leq t \wedge \tau\}}].$$ As we saw in the continuous case, since $E[S_{t \wedge \tau} 1_{\{t \wedge \tau \textless T_n\}}]$ increases to $E[S_{t \wedge\tau}]$, we would have that $E[S_{t \wedge \tau}]  \textless S_0$ for all $t$ if we can show that  $\liminf_{n \to +\infty} E[S_{T_n} 1_{\{T_{n} \leq t \wedge \tau\}}] \textgreater 0.$ 

Now $E[S_{T_n} 1_{\{T_{n} \leq t \wedge \tau\}}] \textgreater 0= \hat{P}(T_{n} \leq t \wedge \tau),$ where under $\hat{P},$ $v$ solves
 \begin{equation*} dv_t=\mu(v_t)dB_t +b(v_t)dt+k^{L}_t\mu^{2}(v_t)+(\rho H_t+J_t)\mu(v_t) +\rho v_t\mu(v_t)dt \end{equation*}

The condition  \begin{equation*} \liminf_{x \to +\infty}(\rho\ x\mu(x)+b(x)+\min(\gep^{(1)}, \gep^{(2)})\mu^{2}(x)-\max(\gep^{(1)}, \gep^{(2)})\mu(x))\phi(x)^{-1} \textgreater 0\ \end{equation*} 
 is sufficient to guarantee that the explosion time of the stochastic differential equation  
\begin{equation*}
  dv_t=\mu(v_t)dW_t +b(v_t)dt + \rho v_t \mu(v_t)+\min(\gep^{(1)}, \gep^{(2)}) \mu^{2}(v_t)dt-\max(\gep^{(1)}, \gep^{(2)})\mu(v_t)dt
    \end{equation*}
 can be made as small (in an appropriate sense) as we wish.
 
The comparison theorem implies that the solution to the SDE $$dv_t=\mu(v_t)dW_t+b(v_t)dt+ \rho \mu(v_t)v_tdt+ k^{L}_t\mu^{2}(v_t)+(\rho H_t+J_t)\mu(v_t)dt$$ is $Q$ almost surely greater than or equal to that of the SDE  $$dv_t=\mu(v_t)dW_t +b(v_t)dt +\min(\gep^{(1)}, \gep^{(2)})\mu^{2}(v_t)dt -\max(\gep^{(1)}, \gep^{(2)})\mu(v_t)dt$$ for all $t \in [0, \tau].$ Thus, since the explosion time of the SDE  
\begin{eqnarray*} 
dS_t&=&S_t v_tdB_t\\
dv_t&=&\mu(v_t)dW_t +b(v_t)dt +\min(\gep^{(1)}, \gep^{(2)})(v_t)dt-\max(\gep^{(1)}, \gep^{(2)})\mu(v_t)dt 
\end{eqnarray*}
can be made as small as possible, the explosion time $T_\infty,$ of 
\begin{eqnarray*}
dS_t&=&S_t v_tdB_t\\
dv_t&=&\mu(v_t)dW_t +b(v_t)dt +\min(\gep^{(1)}, \gep^{(2)})(v_t)dt+\max(\gep^{(1)}, \gep^{(2)})\mu(v_t)dt 
 \end{eqnarray*}
 can be made as small as possible as well. 
 
This means that, for all $t$, we have $\hat{P}(T_\infty \leq t \wedge \tau) \textgreater 0$.  This implies that for all $t$ we have $$E[S^{\tau}_{t}]=\textless S_0,$$ implying that $S_t$ is a local martingale that is not a martingale, and hence a strict local martingale.
\end{proof}

\begin{corollary}
Let $M$ be a L\'{e}vy martingale. Then, by the L\'{e}vy-It\^{o} decomposition, $$M_t=W_t+\int_{|x| \textless 1}x(N(;[0,t],dx)-t\nu(dx)+ \sum_{0 \textless s \textless t} \Delta M_s1_{\{|\Delta M_s| \geq 1\}}- \alpha t$$ In the above, $N_{t}(\Lambda)$ is a Poisson random measure, $\alpha t= E[\sum_{0 \textless s \textless t} \Delta M_s1_{|\Delta M_s| \geq 1}]$  and $\nu(dx)$ is the L\'evy measure of the process $M_t:$ $$\nu(\Lambda)= E[N^{1}(\Lambda)].$$ $M$ satisfies: $d\langle M,M\rangle_t=(1+\int_{\mathbb{R}}x^{2} \nu(dx))dt=cdt.$
Assume that $E[\e^{\int_{0}^{T} (\frac{1}{2}+\int_{\mathbb{R}} x^{2} \nu(dx))v^{2\alpha}_sds}] \textless \infty.$ This is satisfied if $\int v^{\alpha}_sdM_s$ is locally square integrable. Assume also that~\eqref{16e1} holds and that\begin{equation*} \liminf_{x \to +\infty}(\rho\ x\mu(x)+b(x)+\min(\gep^{(1)}, \gep^{(2)}) \mu^{2}(x)-\max(\gep^{(1)}, \gep^{(2)}) \mu(x))\phi(x)^{-1} \textgreater 0\ \end{equation*} Then, the process $S$ of~\eqref{disceq} is a true $(P, \mathbb{F})$ martingale and a $(Q, \mathbb{G})$ strict local martingale. 

\end{corollary}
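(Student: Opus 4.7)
The plan is to verify that the corollary is a direct specialization of Theorem~\ref{t7} to the Lévy case, so the task reduces to identifying the continuous and purely discontinuous parts of $M$ and computing their predictable quadratic variations. First, from the Lévy-Itô decomposition displayed in the statement, the continuous martingale part is $M^c_t = W_t$, yielding $\langle M^c, M^c\rangle_t = t$. The purely discontinuous martingale part $M^d$ consists of the compensated small-jump integral $\int_{|x|<1} x\,(N([0,t],dx)-t\nu(dx))$ together with the compensated large jumps $\sum_{0<s\leq t}\Delta M_s \1{|\Delta M_s|\geq 1} - \alpha t$. Using the standard formula for the predictable quadratic variation of a compensated Poisson integral (orthogonality of jumps on disjoint sets and the identity $E[N([0,t],\Lambda)]=t\nu(\Lambda)$), a direct computation yields $\langle M^d, M^d\rangle_t = t\int_{\mathbb{R}} x^2\,\nu(dx)$.

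Substituting these into the exponential moment hypothesis~\eqref{16e2} of Theorem~\ref{t7} gives exactly
\begin{equation*}
E\!\left[\exp\!\left(\int_0^T v_s^{2\alpha}\Big(\tfrac{1}{2}+\int_{\mathbb{R}} x^2\,\nu(dx)\Big) ds\right)\right]<\infty,
\end{equation*}
which is precisely the integrability condition assumed in the corollary. So~\eqref{16e2} is verified. The jump-size condition~\eqref{16e1}, needed to keep $S$ strictly positive, is assumed explicitly, and the liminf condition on $\mu$ and $b$ involving the function $\phi$ is also taken as a hypothesis.

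With these three ingredients in place, the conclusion of Theorem~\ref{t7} can be invoked verbatim: $S$ is a true $(P,\mathbb{F})$ martingale (by the sufficient condition from~\cite{PhilipKazu} applied with the computed brackets) and, after performing the same Jacod-type enlargement followed by the Girsanov change of measure detailed in Section~\ref{s3}, $S$ is a strict local martingale under $(Q,\mathbb{G})$ on $[0,\tau]$.

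I do not expect a real obstacle here: the only nontrivial step is identifying $d\langle M^d,M^d\rangle_s = (\int x^2\nu(dx))\,ds$ from the Lévy-Itô representation and checking that the constant prefactors $\tfrac{1}{2}$ and $\int x^2\nu(dx)$ align between the two exponents. The parenthetical remark in the statement, that the integrability condition follows from local square integrability of $\int v_s^{\alpha}\,dM_s$, simply reflects that $v^{\alpha}$ being bounded on $[0,T]$ makes the integrand inside the exponential bounded, which yields a finite exponential moment; this is worth noting but is not part of the formal deduction.
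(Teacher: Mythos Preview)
Your proposal is correct and matches the paper's approach: the paper gives no separate proof for the corollary, treating it as an immediate specialization of Theorem~\ref{t7}, and your argument does exactly this by computing $d\langle M^c,M^c\rangle_s=ds$ and $d\langle M^d,M^d\rangle_s=\bigl(\int_{\mathbb{R}}x^2\nu(dx)\bigr)ds$ from the L\'evy--It\^o decomposition and checking that~\eqref{16e2} reduces to the stated exponential-moment condition.
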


\begin{remark}\label{16r1}
We now give an alternative way to ensure that when we change probabilities from $P$ to $Q$ after a filtration enlargement, that $Q$ is indeed a true probability measure and not a sub probability measure. This is an alternative to assuming that the continuous paths equivalent of~\eqref{16e2} holds, although it is related. Let us ensure that, in the discontinuous case we have just encountered, the subprobability measure $Q$ we defined is a \textit{true} probability measure. We will begin by defining the sequence of probability measures $Q_m$ by $$dQ_m=Z_{T\wedge T_m}dP$$ where $Z$ is the Dol\'{e}ans-Dade exponential of $(\int_{0}^{t}H_sdB_s+\int_{0}^{t}J_sdM_s)$ and $T_m=  \inf\{t: \int_{0}^{t} (H^{2}_s+J^{2}_s+2\rho J_sH_s+J^{2}_s\int_{\mathbb{R}} x^{2} \nu(dx))ds \ge h(m)\},$ for some function $h$. We then have $$E[e^{\frac{1}{2}\int_{0}^{t \wedge T_m}(H^{2}_s+J^{2}_s+2\rho J_sH_s+H^{2}_s(\int_{\mathbb{R}} x^{2} \nu(dx)))ds}] \leq e^{\frac{1}{2}h(m)} \textless \infty.$$ Recall that the relation $$k^{L}_tS^{2}_tv^{2\alpha}_tc+\rho J_tS_tv^{\alpha}_t+cH_tS_tv^{\alpha}_t=0 $$ holds true for all $t \geq 0.$ 
   
Continuing, we have $Q_m \ll P$ on $[0,T_m]$ for each $m,$ as well as that the $Q_m$ are true probability measures, since $Z_{t}^{T_m}$ is a true $\mathbb{G}$ martingale. 
  
Note that if $\{Z_{T \wedge T_m}\}_m$ is a uniformly integrable martingale, then $Q$ is equivalent to $P$ on $[0,T].$ This is because the uniform integrability of $(Z^{T_{m}})_{m}$ ensures the $L^{1}$ convergence of $Z^{T_m}$, i.e. $$\lim_{m \to \infty}E[Z_{t \wedge T_m}]=E[Z_{t \wedge \widetilde{T}}]=1,$$  where $\widetilde{T}=\lim_{m \to \infty}T_m.$ It is assumed that $\widetilde{T} \geq T.$ So we obtain that, for all $t$ in the interval $[0,T]:$ $E[Z_{t \wedge \widetilde{T}}]=E[Z_t]=1.$ Thus, $Q$ is equivalent to $P$ on $[0,T].$ 

We take this opportunity to mention that this idea (discovered independently by the first author) is developed in a beautiful (and more general) way in the recent paper of J. Blanchet and J. Ruf~\cite{BR}.

\end{remark}
\subsection{Examples}\label{ss1}

Let us now consider some examples. 

\begin{example}[Mansuy and Yor~\cite{MansuyYor}]
 $S$ and $v$ solve
 \begin{eqnarray*} dS_t&=&S_t v_tdB_t; \qquad S_0=1\\
  dv_t&=&\mu(v_t)dW_t +b(v_t)dt;  \quad v_0=1
 \end{eqnarray*} and $L=B_T.$ In this case, we have \begin{equation*} k^{L}_t= S_tv_t\frac{B_T-B_t}{T-t} \end{equation*} We have $k^{L}_0=S_0v_0 \frac{B_T}{T},$ and indeed, $Q(\omega:k^{L}_0)\textgreater 0.$ Here, It is immediately apparent that the process $k$ has right-continous paths.
\end{example}

\begin{example}[Mansuy and Yor~\cite{MansuyYor}]
 $S$ and $v$ solve
 \begin{eqnarray*} dS_t&=&S_t v_tdB_t; \qquad S_0=1\\
  dv_t&=&\mu(v_t)dW_t +b(v_t)dt;  \quad v_0=1
 \end{eqnarray*} and $L=T_a,$ the first hitting time of $a$ of the Brownian motion $B_t.$ In this case, we have $k^{L}_t=-\frac{1}{a-B_t} +\frac{a-B_t}{T_a-t}.$ Again, It is immediately apparent that the process $k$ has right-continous paths and that $Q(\omega:k^{L}_0)\textgreater 0.$
 
 \end{example}
 
 \begin{example}[The Countable Partition Case]
 
 Let $S$ and $v$ solve
 \begin{eqnarray*} dS_t&=&S_t v_tdB_t; \qquad S_0=1\\
  dv_t&=&\mu(v_t)dW_t +b(v_t)dt;  \quad v_0=1
 \end{eqnarray*}   

 Let us assume that we have a countable partition of the sample space such that $A_i \cap A_j= \emptyset$ if $i \neq j$ and $\bigcup\limits_{i=1}^n A_k=\Omega$ and that the information encoded in $L$ can be modeled as $L= \sum\limits_{i=1}^n a_i1_{A_i}.$  Note that the vector process $\begin{bmatrix}S_t \\ v_t \\ \end{bmatrix}$ is a strong Markov process. Let us define our partition in terms of this Markov process. Fix a time $T \textgreater 0$ and assume we have half open sets $(\alpha_{i}, \beta_{i}]$ such that $\bigcup\limits_{i=1}^n (\alpha_{i}, \beta_{i}]= \mathbb{R}$ and $(\alpha_{i}, \beta_{i}] \cap (\alpha_{j}, \beta_{j}]  =\phi,$ $i \neq j.$ Let $A_{i}=\{\omega: S_T \in (\alpha_{i}, \beta_{i}]\}$ 
 
   If, in this case, we have $a_i \textgreater 0$ and $P(A_i) \textgreater 0$ then we have that the process $k$ satisfies $k^{L}_0 \textgreater 0.$ Consider the sequence of martingales $N^{i}_t=E[1_{A_{i}} |\mathcal{F}_t].$ By the Kunita-Watanabe inequality, there exists processes $\xi^{i}_t$ such that $d[N^{i},S]_t=\xi^{i}_td[S,S]_t.$  Now the determination of whether or not $k$ has right-continuous paths is tantamount to the determination of whether or not, for each $i,$ the processes $\xi^{i}_t$ possess right-continuous paths. Since we are in the Brownian framework, we can employ martingale representation to write: $N^{i}_t=\int_{0}^{t}h^{i}_sdB_s+\int_{0}^{t}g^{i}_sdW_s$ 
   
   Then, $[N^{i},S]_t=\int_{0}^{t}(h^{i}_s+\rho g^{i}_s)S_sv_sds,$ $\rho$ being the correlation of the Brownian motions $B$ and $W,$ and $\xi^{i}_t$ is such that $(h^{i}_t+\rho g^{i}_t)dt=\xi^{i}_tS_tv_tdt.$ If we can prove then, that for each $i$, the processes $h^{i}_t+\rho g^{i}_t$ possess right-continuous paths, then we are done. 
   
   We now apply the results of  ~\cite{JSP}, specifically corollary $2.5$  For all $i$, we can write $f^{i}(\begin{bmatrix}S_T\\ v_T \\ \end{bmatrix}) = \begin{bmatrix}1_{\{ (\alpha_{i}, \beta_{i}]\}} (S_T)\\ 0 \end{bmatrix}.$  For each $i$, we need to find an approximating sequence of functions $f^{i,n}(x)$ such that  $f^{i,n}(\begin{bmatrix}S_T\\ v_T \\ \end{bmatrix})\rightarrow  f^{i}(\begin{bmatrix}S_T\\ v_T \\ \end{bmatrix})$ in $L^{2}(P).$  For all $i$, $f^{i,n}(x)$ must be Borel functions, and $(t,y) \rightarrow P_tf^{i,n}(y)$ on $(0, \infty)$ must be once differentiable in $t$ and twice differentiable $x,$ all partial derivatives being continuous. $P_t$ denotes the transition semigroup of the process  $\begin{bmatrix}S_t \\ v_t \\ \end{bmatrix}$ Note that this holds when the functions $f^{i,n}(x)$ are twice differentiable, with continuous second derivative, and with compact support.  Note that this differentiability is just what we need to apply Theorem 3.2 of~\cite{MPZ}, which gives us that the corresponding process $(h^{i,n}_s+\rho g^{i,n}_s)_{0\leq s\leq T}$ has c\`dal\`ag paths, for each $n$. 
   
  We have that for each $i$ an approximating sequence of functions $f^{i,n}(x)$ of $f^{i}(x)$ is given by $f^{i}(x) \ast \phi^{n}(x),$ where $\phi^{n}(x)$ is a sequence of mollifiers. For example, we can take $\phi^{n}(x)= n^{2}\phi(nx),$ where $\phi(x)=c\e^{-\frac{1}{1-||x||^{2}}}\chi_{[-1,1]}(x).$ We have that $f^{i}(x) \ast \phi^{n}(x)$ is smooth and with compact support. It converges uniformly and thus in $L^{2}$ to $f^{i}(x).$ Moreover, we also have the uniform (in $t$) convergence of $P_tf^{i,n}(y) \rightarrow P_tf^{i}(y).$ Now by Corollary $2.5$ from ~\cite{JSP}, we have for each $i$ the existence of an explicit representation of  a version of the process $h^{i}_t+\rho g^{i}_t$ which indeed possesses c\`{a}dl\`{a}g paths, since it is the uniform (in the time variable) limit  of the c\`dal\`ag processes $h^{i,n}_t+\rho g^{i,n}_t$.

\end{example}

\section{Connections to Mathematical Finance}\label{s4}
 The motivation for this work is to relate possible economic causes of financial bubbles to mathematical models of how they might arise, from within the martingale oriented absence of arbitrage framework. We use the economic cause of speculative pricing that comes from overexcitement of the market due to the disclosure of new information. Examples might be the announcement of a new medicine with major financial consequences (such as a ``cure'' for the common cold, to exaggerate a bit), a technological breakthrough (this is the thesis of John Kenneth Galbraith, for example~\cite{JKG}), a resolution of some sort of political instability, a weather event (such as an early frost for the Florida orange crop), etc. The obvious and intuitive manner to model such an event is by the addition of new observable events to the underlying filtration, and an established way to do that is via the theory of the ``expansion of filtrations." This theory was developed in the 1980s, and a recent presentation can be found, for example, in~\cite[Chapter VI]{PP}. 
 
 The theory of the expansion of filtrations and the martingale theory of an absence of arbitrage do not mesh well, as papers of Imkeller~\cite{PI}, Fontana et al~\cite{JB1}, and the PhD thesis of Anna Aksamit~\cite{ANA} have detailed. Many more references are provided in those papers. Therefore one has to be careful both as to how one expands the filtration as well as to what one means by an absence of arbitrage. Here we use the approach of an ``initial expansion,'' although we interpret it as occurring at a random (stopping) time. We work in an incomplete market setting where there are an infinite number of risk neutral measures; in particular we take a stochastic volatility framework. We show how the expansion of filtrations creates a drift even in a drift free model (this is well known) and then we need to change the risk neutral measure to remove the drift created by the addition of new information. The insight is that under this new risk neutral measure with the new enlarged filtration, the price process changes from a martingale to a strict local martingale. This has financial significance: It has been shown over the last decade that on compact time sets, a price process models a financial bubble if and only if it is  strict local martingale under the risk neutral measure; thus we have shown how a non bubble price process can become a bubble price process after the arrival of new information (via an expansion of the filtration). Our ideas were inspired by the previous works of Carlos Sin~\cite{C-S} and Biagini-F\"ollmer-Nedelcu~\cite{BFN} who were interested in bubble formation, but did not relate it to the expansion of filtrations. 
  
Finally, we remark that this is different from the modeling of insider information, another popular use of the expansion of filtrations; see for example~\cite{ANA,Roseline}.



\end{document}